\newtheorem{theorem}{Theorem}[section]
\newtheorem{proposition}[theorem]{Proposition}
\newtheorem{lemma}[theorem]{Lemma}
\newtheorem{corollary}[theorem]{Corollary}
\theoremstyle{definition}
\newtheorem*{claim*}{Claim}
\newtheorem{definition}[theorem]{Definition}
\newtheorem{remark}[theorem]{Remark}
\begin{document}
\title{A new characterization of (pre)liminary C*-algebras}
\author{Martino Lupini}
\address{Dipartimento di Matematica, Universit\`{a} di Bologna, Piazza di
Porta S. Donato, 5, 40126 Bologna,\ Italy}
\email{martino.lupini@unibo.it}
\urladdr{http://www.lupini.org/}
\thanks{The author was partially supported by the Marsden Fund Fast-Start
Grant VUW1816 and the Rutherford Discovery Fellowship VUW2002
\textquotedblleft Computing the Shape of Chaos\textquotedblright\ from the
Royal Society of New Zealand, the Starting Grant 101077154 \textquotedblleft
Definable Algebraic Topology\textquotedblright\ from the European Research
Council, the Gruppo Nazionale per le Strutture Algebriche, Geometriche e le
loro Applicazioni (GNSAGA) of the Istituto Nazionale di Alta Matematica
(INDAM), and the University of Bologna. Part of this work was done during a
visit of the author to Chalmers University of Technology and the University
of G\"{o}thenburg. The hospitality of these institutions is gratefully
acknowledged.}
\subjclass[2000]{Primary 46L05, 46L35; Secondary 03E15, 54D35}
\keywords{C*-algebra, type I C*-algebra, Fell algebra, Fell's condition,
liminary C*-algebra, postliminary C*-algebra, preliminary C*-algebra, type $%
I_{\alpha }$ C*-algebra, $\alpha $-subhomogeneous C*-algebra, descriptive
set theory, Fell compactification.}
\date{\today }

\begin{abstract}
Given an arbitrary countable ordinal $\alpha $, we introduce the notion of
type $I_{\alpha }$ C*-algebra and $\alpha $-subhomogeneous C*-algebra. When $%
\alpha =0$, these recover the notions of Fell C*-algebra and of commutative
C*-algebra, respectively. When $\alpha =n<\omega $, these recover the
notions of type $I_{n}$ C*-algebra and of $n$-subhomogeneous C*-algebra,
respectively. We prove that a separable C*-algebra is liminary if and only
if it is type $I_{\alpha }$ for some $\alpha <\omega _{1}$, and it is
preliminary (i.e., has no infinite-dimensional irreducible representation)
if and only if it is $\alpha $-subhomogeneous for some $\alpha <\omega _{1}$%
. We also prove that for any countable ordinal $\alpha $ there exists a
separable C*-algebra that is type $I_{\alpha }$ and not type $I_{\beta }$
for $\beta <\alpha $, and a separable C*-algebra that is $\alpha $%
-subhomogeneous and not $\beta $-subhomogeneous for any $\beta <\alpha $.
\end{abstract}

\maketitle

\section{Introduction}

This paper is a contribution to the study of the structure and
classification of separable C*-algebras. We focus on the class of so-called 
\emph{liminary }C*-algebras, also known as \emph{liminal }or CCR. These are
precisely the C*-algebras whose irreducible representations on a Hilbert
space have the property that their image \emph{coincides} with the algebra
of compact operators. The more generous requirement that the image \emph{%
contains }the algebra of compact operators yields the notion of \emph{%
postliminary }C*-algebra, also known as \emph{postliminal }or Type I. Within
the class of postliminary C*-algebras, liminary C*-algebras are precisely
those whose \emph{primitive spectrum }endowed with the \emph{Jacobson
topology }has closed points---i.e., every prime ideal is \emph{maximal}
within the class of prime (or, equivalently, all) ideals \cite[Corollary 2.6]%
{gardella_prime_2024}.

The study of liminary and postliminary C*-algebras goes back to the early
days of C*-algebra theory. Its forefathers are Kaplansky \cite%
{kaplansky_algebras_1952,kaplansky_structure_1951} and Mackey \cite%
{mackey_borel_1957,mackey_induced_1951,mackey_induced_1952,mackey_induced_1953}%
, who pioneered the study of liminary and postliminary C*-algebras in the
1950s. Their motivation included the classification problem for C*-algebras
and their \emph{irreducible representations}. The classification problem of
irreducible representations of locally compact Hausdorff topological groups
can be seen as a particular instance of the one for C*-algebras, by
considering the corresponding \emph{group C*-algebra}. The theory of
liminary and postliminary C*-algebras was further developed in the 1960s by
Dixmier \cite{dixmier_structures_1960, dixmier_dual_1962,
dixmier_algebres_1960,dixmier_espace_1968}, Effros \cite%
{effros_decomposition_1963, effros_transformation_1965}, Fell \cite%
{fell_algebras_1960,fell_structure_1961,fell_dual_1960}, and Glimm \cite%
{glimm_type_1961} among others.

An upshot of these works is Glimm's Theorem from \cite{glimm_type_1961}
characterizing separable postliminary C*-algebras in terms of the
corresponding classification problem for irreducible representations: a
separable C*-algebra is postliminary if and only if its irreducible
representations are \emph{concretely classifiable }up to unitary
equivalence, and the corresponding Borel structure induced on the spectrum
of the C*-algebra is \emph{standard}.

Since then, the study of the representation theory of C*-algebras has
divided into two streams, corresponding to the two alternatives in the type $%
I$ versus non-type $I$ dichotomy. In the non-type $I$ case, the study of the
classification problem for irreducible C*-algebras and their
\textquotedblleft definable spectra\textquotedblright\ has refined the
analysis from the perspective of Borel complexity theory \cite%
{kerr_turbulence_2010,thomas_descriptive_2015,farah_dichotomy_2012,hjorth_classification_2000}%
. In the type $I$ case, more stringent notions have been introduced in the
attempt to obtain a complete description of the C*-algebras under
considerations and their spectra.

Among these, the notion of type $I_{0}$ C*-algebra, also known as Fell
algebra, has been one of the most fruitful. Defined in terms of the so-call 
\emph{Fell condition}, Fell algebras have been studied by several authors 
\cite%
{pedersen_ideals_1970,archbold_transition_1993,an_huef_transformation_2001,sims_operator_2020}%
. Fell's condition plays a crucial role in the characterization and
classification of \emph{continuous trace }C*-algebras, achieved by Dixmier
and Douady in terms of bundles of elementary C*-algebras \cite%
{dixmier_champs_1963,dixmier_champs_1963-1}; see also \cite%
{raeburn_morita_1998}. More recently, the Dixmier--Douady classification has
been extended by means of suitable groupoid models to arbitrary Fell
algebras by an Huef, Kumjian, and Sims \cite{an_huef_dixmier-douady_2011};
see also \cite{clark_equivalence_2013,deeley_fell_2022}.

Fell algebras admit several equivalent characterizations. Very recently,
Enders and Shulman proved Fell's condition to be equivalent to commutativity
of the central sequence algebra \cite{enders_commutativity_2022}. Motivated
by this characterization, they introduced Fell's condition \emph{of order }$%
k $ for a given positive integer $k$ (the original Fell condition
corresponding to the case $k=0$), and proved it to be equivalent to $k$%
-subhomogeneity of the central sequence algebra \cite%
{enders_commutativity_2022}.

In this work, we introduce a further generalization, by replacing $k$ with
an arbitrary \emph{countable ordinal }$\alpha $. We define the
(ordinal-valued) \emph{Fell rank} of an irreducible representation of a
C*-algebra, capturing Fell's conditions of higher order. The class of type $%
I_{\alpha }$ C*-algebras is defined by requiring that all their irreducible
representations have Fell rank at most $1+\alpha $. For $\alpha =0$ this
subsumes the classical notion of type $I_{0}$ C*-algebra, and for $\alpha
<\omega $ one recovers the Enders--Shulman definition.

As observed in \cite{enders_commutativity_2022}, a type $I_{0}$ and, more
generally, a type $I_{n}$ C*-algebra is necessarily \emph{liminary}, and the
same holds for arbitrary countable ordinals. It is also remarked in \cite%
{enders_commutativity_2022} that there exist liminary C*-algebras that are
not type $I_{n}$ for any $n<\omega $. We prove that by considering arbitrary
countable ordinals one obtains a hierarchy that is \emph{complete}, and
includes all liminary C*-algebras; see Theorem \ref{Theorem:complete}.

\begin{theorem}
Let $A$ be a separable C*-algebra. Then $A$ is liminary if and only if it is
type $I_{\alpha }$ for some countable ordinal $\alpha $.
\end{theorem}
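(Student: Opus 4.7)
The implication from type $I_{\alpha}$ to liminary extends the Enders--Shulman argument for $\alpha<\omega$ from \cite{enders_commutativity_2022} by transfinite induction: Fell (type $I_0$) algebras are liminary, and liminarity passes through extensions and through direct limits of ideals. I therefore focus on the converse and assume $A$ is a separable liminary C*-algebra.

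The plan is to build, by transfinite recursion, an increasing chain of closed two-sided ideals $(J_\alpha)_{\alpha<\omega_1}$ of $A$ with $J_0=0$, with $J_{\alpha+1}/J_\alpha$ a Fell C*-algebra for every $\alpha$, and with $J_\lambda=\overline{\bigcup_{\beta<\lambda}J_\beta}$ at limit ordinals. The successor step rests on the following classical input: \emph{every nonzero separable liminary C*-algebra $B$ has a nonzero largest Fell ideal}, equivalently, the locus of irreducible representations satisfying Fell's condition is a nonempty open subset of $\hat B$. Granting this, if $J_\alpha\neq A$ then $A/J_\alpha$ is again separable and liminary, and the preimage in $A$ of its Fell ideal provides $J_{\alpha+1}\supsetneq J_\alpha$. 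Separability of $A$ makes $\hat A$ second countable, and any strictly increasing well-ordered chain of open subsets of a second countable space has countable order type; hence the recursion must terminate at some $\alpha_0<\omega_1$, necessarily with $J_{\alpha_0}=A$.

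To finish, I would translate the existence of this composition series into the desired bound on Fell ranks. By transfinite induction on $\beta$, one verifies that an irreducible representation of $A$ annihilating $J_\beta$ but not $J_{\beta+1}$ has Fell rank exactly $1+\beta$ in the intrinsic sense defined earlier in the paper; in particular every irreducible representation of $A$ has Fell rank at most $1+\alpha_0$, making $A$ type $I_{\alpha_0}$.

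The main obstacle is the classical input on nonvanishing of the Fell ideal for nonzero liminary C*-algebras. The argument I have in mind uses that for separable liminary $B$ the spectrum $\hat B$ is a second countable $T_1$ space and is in fact Baire; a point $\pi\in\hat B$ possessing a Hausdorff open neighborhood can then be produced by a category argument, and in the liminary setting such a locally Hausdorff point automatically satisfies Fell's condition via a continuous selection of rank-one subprojections on a neighborhood, in the spirit of classical work of Fell and Dixmier. A secondary point of care is to ensure that the composition-series formulation genuinely matches the paper's intrinsic ordinal-valued Fell rank, rather than some a priori different transfinite invariant; I expect this alignment to be routine by double transfinite induction, but it must be checked explicitly at limit stages.
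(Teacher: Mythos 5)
There are genuine gaps in both directions. For type $I_{\alpha}\Rightarrow$ liminary, your appeal to ``liminarity passes through extensions'' is false: the unitization $K(H)^{\dagger }$ is an extension of $\mathbb{C}$ by $K(H)$, both liminary, yet its identity representation has non-compact range. Closure under extensions and increasing unions characterizes \emph{postliminary} algebras, not liminary ones, so a transfinite induction along a decomposition series can only yield postliminarity. The paper's argument is instead direct and needs no induction: an element $a$ of countable Pedersen rank must have $\mathrm{rank}\left( \pi \left( a\right) \right) <\infty $ for \emph{every} irreducible $\pi $ (otherwise its rank is $\infty $), hence every element of the ideal $\Pi _{1+\alpha }\left( A\right) $ generated by such elements is sent to a compact operator (Lemma \ref{Lemma:Fell-liminary}).

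For the main direction, the decomposition series you construct---iterating the largest Fell ideal---exists for \emph{every} separable postliminary C*-algebra, since the nonvanishing of the Fell ideal that you isolate as the ``classical input'' holds whenever $A$ is type $I$, liminary or not. If your translation step were correct it would therefore show that $K(H)^{\dagger }$ is type $I_{\alpha }$ for some $\alpha $, hence liminary, which it is not. Concretely, the character $\chi $ of $K(H)^{\dagger }$ with kernel $K(H)$ annihilates $J_{1}=K(H)$ but not $J_{2}=K(H)^{\dagger }$, yet its Fell rank is $\infty $: every open set containing $\mathrm{Ker}\left( \chi \right) $ also contains the kernel of the identity representation, and any $b$ with $\chi \left( b\right) \neq 0$ has infinite rank there, so no local section around $\chi $ has a finite weight function. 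This contradicts your claimed value $1+\beta =2$ (which is in any case incompatible with the paper's rank, whose values have the form $\omega \gamma +k$). The ingredient you are missing is precisely where liminarity must enter \emph{pointwise}: Fell's classical result (Proposition \ref{Proposition:Fell}) that every CCR representation admits a local section with finite weight function. From there the paper gets countability of each individual Fell rank via the boundedness statement in Lemma \ref{Lemma:finite-order}(5), and a uniform countable bound via a second boundedness argument applied to the increasing family of Jacobson-open sets $X_{\alpha }=\left\{ \mathrm{Ker}\left( \pi \right) :r_{F}\left( \pi \right) \leq \alpha \right\} $ in the Polish Fell topology on $\mathrm{Prim}\left( A\right) $; no decomposition series is used at all.
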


We also prove that the hierarchy is \emph{strict}, namely, for any countable
ordinal $\alpha $ there exist separable liminary C*-algebras that are type $%
I_{\alpha }$ and not type $I_{\beta }$ for any $\beta <\alpha $. Such
C*-algebras can be chosen to be \emph{scattered} (particularly,
approximately finite-dimensional) with a \emph{countable compact metrizable
spectrum}, and no infinite-dimensional irreducible representations. Other
examples of such C*-algebras were previously considered by Lazar and Taylor 
\cite{lazar_example_1980}.

Fell algebras admit yet another characterization, isolated by Pedersen, in
terms of \emph{abelian elements}. An element of a C*-algebra is abelian if
the hereditary C*-algebra it generates is abelian. Enders and Shulman
extended this characterization to arbitrary type $I_{n}$ C*-algebras, by
replacing abelian elements with elements with \emph{global rank} at most $n$%
. This notion is obtained by demanding that the hereditary C*-algebra they
generate be $n$-\emph{subhomogeneous}.\emph{\ }(A C*-algebra is $n$%
-subhomogeneous if its irreducible representations have dimension at most $n$%
. For $n=1$, this recovers the notion of abelian C*-algebra.)

We extend the Enders--Shulman characterization to type $I_{\alpha }$
C*-algebras for an arbitrary countable ordinal $\alpha $. This is obtained
in terms of the (ordinal-valued) \emph{Pedersen rank }of an element of a
C*-algebra, generalizing the notion of abelian element (corresponding to the
rank $0$ case) and global rank $n$ (corresponding to the case of finite
ordinals). In turn, this yields a corresponding notion of $\alpha $-\emph{%
subhomogeneous} C*-algebra for an arbitrary countable ordinal $\alpha $.
Whereas $n$-subhomogeneous C*-algebras have all irreducible representations
of dimension at most $n$, an $\alpha $-subhomogeneous C*-algebra can have
irreducible representations of arbitrary large, albeit finite, dimension
when $\alpha $ is infinite.

We define a C*-algebra to be \emph{preliminary }if it has no
infinite-dimensional representations. A preliminary C*-algebra is, in
particular, liminary. This class of C*-algebras has received attention
recently, in the work of Courtney and Shulman \cite{courtney_elements_2019};
see also \cite%
{niemiec_models_2021,niemiec_elementary_2015,shulman_approximations_2019,takesaki_liminal_1971,lazar_bundles_2008,jensen_second_1982,jensen_scattered_1983,thiel_generator_2023,hartz_dilation_2021,brown_higher_2016}%
. As in the case of type $I_{\alpha }$ C*-algebras within liminary
C*-algebras, we prove that the hierarchy of $\alpha $-subhomogeneous
C*-algebras within preliminary C*-algebras is \emph{complete}; see Theorem %
\ref{Theorem:complete}.

\begin{theorem}
Let $A$ be a separable C*-algebra. Then $A$ is preliminary, i.e., has no
infinite-dimensional irreducible representation, if and only if it is $%
\alpha $-subhomogeneous for some countable ordinal $\alpha $.
\end{theorem}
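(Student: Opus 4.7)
The plan is to establish both directions by transfinite induction, closely paralleling the proof of the analogous theorem for liminary versus type $I_{\alpha}$ C*-algebras.

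For the easy direction, that $\alpha$-subhomogeneity implies being preliminary, I would argue by transfinite induction on $\alpha$. The base case $\alpha=0$ is immediate since a $0$-subhomogeneous C*-algebra is commutative and hence has only one-dimensional irreducible representations. For the inductive step, the transfinite definition of Pedersen rank equips any $\alpha$-subhomogeneous C*-algebra with a composition series of closed ideals whose successive subquotients are $\beta$-subhomogeneous for ordinals $\beta<\alpha$. The induction hypothesis yields that these subquotients are preliminary, and the property of being preliminary is preserved under extensions, since every irreducible representation of an extension either factors through the quotient or restricts to an irreducible representation of the ideal.

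For the converse, let $A$ be a separable preliminary C*-algebra. I would construct by transfinite recursion an ascending chain of closed two-sided ideals $(J_\alpha)_{\alpha<\omega_1}$ of $A$: set $J_0=0$; at a successor step, let $J_{\alpha+1}$ be the preimage in $A$ of the closed ideal of $A/J_\alpha$ generated by the abelian (Pedersen rank $0$) elements of $A/J_\alpha$; at a limit ordinal $\lambda$, set $J_\lambda=\overline{\bigcup_{\beta<\lambda}J_\beta}$. Since $A$ is separable, the ascending chain $(J_\alpha)$ of closed ideals must stabilize at some countable ordinal $\alpha_0$; denote its eventual value by $J$. By unravelling the recursion, every element of $J$ has Pedersen rank at most $\alpha_0$, so it suffices to show $J=A$ to conclude that $A$ is $\alpha_0$-subhomogeneous.

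The main obstacle is therefore the following assertion: every nonzero separable preliminary C*-algebra $B$ contains a nonzero abelian element. Applied to $B=A/J$, which is itself separable and preliminary, this forces $J=A$, completing the proof. To prove the assertion, I would use that $B$ is in particular postliminary and separable, so its spectrum $\hat{B}$ is a second countable $T_0$ space. A classical Cantor--Bendixson-type argument going back to Dixmier then produces a nonempty open Hausdorff subset of $\hat{B}$, which corresponds to a nonzero Fell ideal $F$ of $B$. By Pedersen's characterization of Fell algebras, $F$ contains a nonzero abelian element $a$; since $F$ is an ideal of $B$, the hereditary C*-subalgebra of $B$ generated by $a$ coincides with the one generated in $F$ and is therefore commutative, so $a$ is abelian in $B$ as well.
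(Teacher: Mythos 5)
Your proof of the forward (hard) direction has a genuine gap at the step ``by unravelling the recursion, every element of $J$ has Pedersen rank at most $\alpha_0$.'' This is not justified, and it is in fact false. The ordinal at which your chain $(J_\alpha)$ stabilizes measures how long it takes to exhaust $A$ by ideals \emph{generated} by abelian elements; it does not bound the Pedersen rank of the elements of those ideals. An abelian element has Pedersen weight $\leq 1$ and a finite sum of abelian elements has bounded weight, but an element of the \emph{closed} linear span of abelian elements can have Pedersen rank equal to any prescribed countable ordinal. The paper's own examples witness this: the Lazar algebra $L_{\beta}$ is a unital separable preliminary C*-algebra satisfying Fell's condition of order $1$, i.e.\ it is a Fell algebra generated by its abelian elements, so your recursion terminates with $J_{1}=A$ and $\alpha_{0}=1$; nevertheless $r_{P}(L_{\beta})=r_{P}(1_{L_{\beta}})=\omega \beta +1$, so $L_{\beta}$ is not $1$-subhomogeneous (indeed not $n$-subhomogeneous for any $n<\omega$). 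In other words, your argument proves at best a statement about the type $I_{\alpha}$ hierarchy (ideals generated by low-rank elements), whereas $\alpha$-subhomogeneity requires \emph{every} element to have low Pedersen rank; keeping these two hierarchies apart is precisely the point of the paper's final section. The ``main obstacle'' you isolate (every nonzero preliminary C*-algebra contains a nonzero abelian element) is true but only shows $J=A$; it contributes nothing toward bounding the rank.

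The missing ingredient is a boundedness argument. The paper's proof passes to the unitization $A^{\dagger}$, which is still preliminary, so that $r_{P}(A)\leq r_{P}(A^{\dagger})=r_{P}(1_{A^{\dagger}})=r(w_{A^{\dagger}})$, observes that $w_{A^{\dagger}}$ is a \emph{finite} weight on the standard second countable space $\mathrm{Prim}(A^{\dagger})$ by Proposition \ref{Proposition:finite-Pedersen}, and then invokes Lemma \ref{Lemma:finite-order}(5), whose proof rests on a descriptive-set-theoretic boundedness theorem applied to the transfinite sequence of open sets $X_{\alpha}=\{x:o_{w}(x)\leq \alpha\}$. No C*-algebraic filtration substitutes for this step. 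Separately, your proof of the easy direction is more elaborate than needed and its claim that an $\alpha$-subhomogeneous algebra carries a composition series with $\beta$-subhomogeneous subquotients for $\beta<\alpha$ is not established (nor obviously true with these ordinal conventions); the paper's direct argument is that an infinite-dimensional irreducible representation $\pi$ admits some $a$ with $\pi(a)$ of infinite rank, whence $r_{P}(a)=\infty$ and $A$ is not $\alpha$-subhomogeneous for any ordinal $\alpha$.
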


Again, this hierarchy is \emph{strict}, as for any countable ordinal $\alpha 
$ one can find $\alpha $-subhomogeneous C*-algebras that are not $\beta $%
-subhomogeneous for any $\beta <\alpha $. Furthermore, one can even find
such examples to be \emph{scattered} with \emph{countable compact metrizable
spectrum}.

As a reference for notions pertaining to C*-algebras we refer the reader to
the monographs \cite%
{blackadar_operator_2006,pedersen_algebras_1979,dixmier_algebras_1977}. We
will also invoke results from general topology and descriptive set theory,
as can be found in the manuals \cite%
{kechris_classical_1995,gao_invariant_2009}.

The rest of this paper is divided into nine sections, apart from this
introduction.\ In Section \ref{Section:algebras} we recall some fundamental
notions concerning the classes of C*-algebras that we will consider. In\
Section \ref{Section:topology} we will introduce some notions from topology
and descriptive set theory. In Section \ref{Section:Fell-compactification}
we present the Fell compactification construction, and record some of its
properties. The notion of \emph{weighted space }is introduced in Section \ref%
{Section:weights}. Some combinatorial notions concerning trees and their
ranks are recalled in Section \ref{Section:trees} and Section \ref%
{Section:ranks}.\ In Section \ref{Section:spectra} separable C*-algebras are
studied in terms of their spectrum as a weighted space. In Section \ref%
{Section:Fell-and-Pedersen} the notions of type $I_{\alpha }$ and $\alpha $%
-subhomogeneous C*-algebra for an arbitrary countable ordinal $\alpha $ are
introduced, and proved to provide a complete hierarchy. Finally, Section \ref%
{Section:high-rank} produces examples of preliminary C*-algebras of
arbitrarily high Pedersen and Fell rank.

\subsubsection*{Acknowledgments}

We are thankful to Jeffrey Bergfalk, Luigi Caputi, Alessandro Codenotti,
Ivan Di Liberti, Eusebio Gardella, Ilja Gogic, Alexander Kechris,
Aristotelis Panagiotopoulos, Ebrahim Samei, Tatiana Shulman, and Joseph
Zielinski for several helpful conversations.

\section{Some C*-algebra notions\label{Section:algebras}}

In this section we recall some notions from C*-algebra theory, and
particularly the notions of \emph{liminary} and \emph{postliminary}
C*-algebra, as well as of \emph{scattered }C*-algebra; see also \cite%
{pedersen_algebras_1979,dixmier_algebras_1977,blackadar_operator_2006}.

\subsection{Spectrum of a C*-algebra}

A (closed, two-sided) ideal of a separable C*-algebra $A$ is \emph{primitive 
}if it is the kernel of an irreducible representation of $A$. The \emph{%
primitive spectrum }\textrm{Prim}$\left( A\right) $ is the set of primitive
ideals of $A$, while the \emph{spectrum} $\hat{A}$ of $A$ is the set of
unitary equivalence classes of irreducible representations of $A$ \cite[%
Section 4.1]{pedersen_algebras_1979}. The Jacobson topology on $\mathrm{Prim}%
\left( A\right) $ is defined to have as\emph{\ closed sets} those of the form%
\begin{equation*}
\mathrm{hull}\left( I\right) :=\left\{ t\in \mathrm{Prim}\left( A\right)
:I\subseteq t\right\}
\end{equation*}%
where $I$ varies among the closed ideals of $A$. (It follows from the fact
that every closed ideal of $A$ is intersection of primitive ideals that such
a topology is well-defined.) Such a topology is \emph{compact }when $A$ is
unital.

There is a canonical map $\hat{A}\rightarrow \mathrm{Prim}\left( A\right) $
sending a unitary equivalence class of irreducible representations of $A$ to
the kernel of any of its representatives. One considers $\hat{A}$ as a
topological space with respect to the topology that makes such a map open
and continuous.

If $I$ is a closed ideal of $A$, then the map%
\begin{equation*}
t\mapsto t\cap I
\end{equation*}%
establishes a homeomorphism%
\begin{equation*}
\mathrm{Prim}\left( A\right) \setminus \mathrm{hull}\left( I\right)
\rightarrow \mathrm{Prim}\left( I\right)
\end{equation*}%
while the map%
\begin{equation*}
t\mapsto t/I
\end{equation*}%
establishes a homeomorphism%
\begin{equation*}
\mathrm{hull}\left( I\right) \rightarrow \mathrm{Prim}\left( A/I\right) 
\text{;}
\end{equation*}%
see \cite[Theorem 4.1.11]{pedersen_algebras_1979}. The assignment 
\begin{equation*}
I\mapsto \mathrm{Prim}\left( A\right) \setminus \mathrm{hull}\left( I\right)
\cong \mathrm{Prim}\left( I\right)
\end{equation*}%
establishes an order isomorphism between the lattice of closed ideals in $A$
and the lattice of open subsets of $\mathrm{Prim}\left( A\right) $.

Let $A$ be a \emph{separable} C*-algebra. The \emph{quasi-state space} $%
Q\left( A\right) $ of $A$ is the space of positive linear functionals on $A$
of norm at most $1$, which is a w*-closed subspace of the dual of $A$ \cite[%
Section 3.2.1]{pedersen_algebras_1979}. The space $P\left( A\right) $ of
non-zero \emph{extreme points }of $Q\left( A\right) $ is a $G_{\delta }$
subspace of $Q\left( A\right) $, whence Polish when endowed with the
subspace topology \cite[Proposition 4.3.2]{pedersen_algebras_1979}. The
elements of $P\left( A\right) $ are called \emph{pure states }of $A$.

The Borel T-structure (T stands for topological) on $\hat{A}$ is the $\sigma 
$-algebra generated by the open sets in the Jacobson topology, while the
Borel M-structure (M stands for Mackey) on $\hat{A}$ is the one induced from
the standard Borel structure on $P\left( A\right) $ by the map $P\left(
A\right) \rightarrow \hat{A}$ mapping a pure state $\phi $ to the kernel of
the corresponding irreducible representation $\pi _{\phi }$\ \cite[Theorem
4.3.3]{pedersen_algebras_1979}. Then we have that the Borel T-structure is
weaker than the Borel M-structure, but they coincide (and they are standard)
when $A$ is type $I$ \cite[Proposition 6.3.2]{pedersen_algebras_1979}.

Suppose that $x\in A$.\ Then $x$ defines a lower semi-continuous function $%
\check{x}:\mathrm{Prim}\left( A\right) \rightarrow \mathbb{R}$ by $\check{x}%
\left( J\right) :=\left\Vert x+J\right\Vert _{A/J}$. In fact, the Jacobson
topology on $\mathrm{Prim}\left( A\right) $ is the weakest topology that
makes $\check{x}$ lower semi-continuous for all $x$ in (some countable dense
subset of) $A$; see \cite[Section 4.4]{pedersen_algebras_1979}.

If $A$ and $B$ are separable C*-algebras, and at least one between $A$ and $%
B $ is type $I$, then there exists a canonical continuous map from $\hat{A}%
\times \hat{B}$ to the spectrum of $A\otimes B$ \cite[IV.3.4.22]%
{blackadar_operator_2006}. This is in fact a homeomorphism \cite[IV.3.4.28]%
{blackadar_operator_2006}, and it induces a homeomorphism%
\begin{equation*}
\mathrm{Prim}\left( A\right) \times \mathrm{Prim}\left( B\right) \rightarrow 
\mathrm{Prim}\left( A\otimes B\right) \text{.}
\end{equation*}%
In particular, $\mathrm{Prim}\left( A\right) $ and $\mathrm{Prim}\left(
A\otimes K\left( H\right) \right) $ are homeomorphic, since $\mathrm{Prim}%
\left( K\left( H\right) \right) $ is a singleton \cite[IV.1.2.2]%
{blackadar_operator_2006}.

\subsection{Liminary and postliminary C*-algebras}

Let $A$ be a separable C*-algebra, and $\pi $ be an irreducible
representation of $A$ on a Hilbert space $H$. Then $\pi $ is CCR if $\pi
\left( A\right) \subseteq K\left( H\right) $ (which implies $\pi \left(
A\right) =K\left( H\right) $), and GCR if $\pi \left( A\right) \cap K\left(
H\right) \neq 0$ (and hence $K\left( H\right) \subseteq \pi \left( A\right) $%
); see \cite[IV.1.3.1]{blackadar_operator_2006}. A C*-algebra is CCR
(respectively, GCR) if every irreducible representation of $A$ is CCR\
(respectively, GCR). A C*-algebra is \emph{liminary }if and only if it is
CCR; see \cite[IV.1.3.1]{blackadar_operator_2006}. Recall also that an
element $x$ of $A$ is \emph{abelian }if the hereditary subalgebra $\overline{%
xAx^{\ast }}$ it generates is commutative \cite[IV.1.1.1]%
{blackadar_operator_2006}. A Fell C*-algebra $A$ (also called type $I_{0}$
C*-algebra) is a C*-algebra generated by its abelian elements \cite[IV.1.1.6]%
{blackadar_operator_2006}.\ Every Fell C*-algebra is liminary \cite[IV.1.3.2]%
{blackadar_operator_2006}. A separable C*-algebra is \emph{elementary }if it
is isomorphic to $K\left( H\right) $ for some separable Hilbert space $H$ 
\cite[IV.1.2.1]{blackadar_operator_2006}. Liminary C*-algebras were
originally introduced by Kaplansky in \cite{kaplansky_structure_1951}; see 
\cite[Section 6.2.13]{pedersen_algebras_1979}. Fell C*-algebras were
introduced by Pedersen \cite[Section 6.1.14]{pedersen_algebras_1979}, and
can be characterized in terms of Fell's condition \cite[IV.1.4.17]%
{blackadar_operator_2006}.

\begin{definition}
Let $A$ be a separable C*-algebra and $\mathcal{C}$ be a class of
C*-algebras. A \emph{decomposition series} for $A$\emph{\ }with subquotients
in $\mathcal{C}$ is a chain $\left( R_{\alpha }\right) _{\alpha <\omega
_{1}} $ of closed ideals of $A$ such that:

\begin{enumerate}
\item $R_{\alpha +1}/R_{\alpha }\in \mathcal{C}$ for every $\alpha <\omega
_{1}$;

\item the union of $R_{\alpha }$ for $\alpha <\lambda $ is dense in $%
R_{\lambda }$ for every limit ordinal $\lambda $;

\item $R_{\alpha }=A$ eventually.
\end{enumerate}
\end{definition}

If $A$ is a separable C*-algebra, then it has a \emph{largest liminary }%
closed ideal\emph{\ }(i.e., a largest element in the collection of closed
ideals that are liminal as C*-algebras); see \cite[IV.1.3.9]%
{blackadar_operator_2006}. This can be explicitly defined as the ideal $%
J_{0}\left( A\right) $ comprising the $x\in A$ such that for every
irreducible representation $\pi $ of $A$, $\pi \left( x\right) $ is a
compact operator. Then one can recursively define a decomposition series $%
\left( J_{\alpha }\left( A\right) \right) _{\alpha \leq \omega _{1}}$ for $A$
with liminary subquotients, such that%
\begin{equation*}
J_{\alpha +1}\left( A\right) /J_{\alpha }\left( A\right) =J_{0}\left(
A/J_{\alpha }\left( A\right) \right) \text{.}
\end{equation*}%
The C*-algebra $A$ is \emph{postliminary} if $A=J_{\alpha }\left( A\right) $
for some $\alpha <\omega _{1}$; see \cite[IV.1.3.9]{blackadar_operator_2006}.

A separable C*-algebra also has a largest Fell ideal \cite[IV.1.1.8]%
{blackadar_operator_2006}. This can be explicitly defined as the
C*-subalgebra $I_{0}\left( A\right) $ of $A$ generated by its abelian
elements. (In fact, it is also the closed linear span of the abelian
elements.) Again, one can recursively define the ideals $I_{\alpha }\left(
A\right) $ for $\alpha \leq \omega _{1}$, and $A$ is type $I$ if and only if 
$A=I_{\alpha }\left( A\right) $ for some $\alpha <\omega _{1}$ \cite[%
IV.1.1.12]{blackadar_operator_2006}. Furthermore, $I_{\omega _{1}}\left(
A\right) =J_{\omega _{1}}\left( A\right) $ is the largest type $I$
C*-subalgebra of $A$ \cite[IV.1.1.12 and IV.1.3.9]{blackadar_operator_2006}.

Let $A$ be a separable C*-algebra. A positive element $x$ of $A$ defines a
lower semi-continuous function%
\begin{equation*}
\hat{x}:\mathrm{Prim}\left( A\right) \rightarrow \left[ 0,\infty \right] 
\text{, }\mathrm{\mathrm{Ker}}\left( \pi \right) \mapsto \mathrm{Tr}\left(
\pi \left( x\right) \right) \text{;}
\end{equation*}%
see \cite[IV.1.4.8]{blackadar_operator_2006}. The element $x$ has \emph{%
continuous trace }if $\hat{x}$ takes values in $[0,n]$ for some $n\in 
\mathbb{N}$ and it is continuous. Then the set $\mathfrak{m}\left( A\right)
_{+}$ of positive continuous trace elements of $A$ is the positive part of
an ideal $\mathfrak{m}\left( A\right) $ \cite[IV.1.4.11 ]%
{blackadar_operator_2006}. A C*-algebra $A$ has \emph{continuous trace} if $%
\mathfrak{m}\left( A\right) $ is dense in $A$.

\begin{proposition}
\label{Proposition:postliminal}Let $A$ be a separable C*-algebra. The
following assertions are equivalent:

\begin{enumerate}
\item $A$ is postliminary;

\item $A$ is GCR;

\item $A$ is type $I$;

\item the canonical map $\hat{A}\rightarrow \mathrm{Prim}\left( A\right) $
is a homeomorphism;

\item the Borel T-structure on $\hat{A}$ is standard;

\item the Borel M-structure on $\hat{A}$ is standard;

\item $A$ admits a decomposition series with continuous trace subquotients.
\end{enumerate}

Furthermore, the following conditions are equivalent:

\begin{enumerate}[label=(\alph*)]

\item $A$ is type $I$ and $\hat{A}$ is $T_{1}$;

\item $A$ is type $I$ and $\mathrm{Prim}\left( A\right) $ is $T_{1}$;

\item $A$ is liminary.
\end{enumerate}
\end{proposition}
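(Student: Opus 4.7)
The plan is to assemble the equivalences from the classical literature on Glimm's theorem and the theory of continuous trace, treating the ``furthermore'' clause at the end separately. The bulk of the work is citation; the only genuinely non-routine step is the existence of continuous trace subquotients in (3)$\Rightarrow$(7).

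The block (1)--(3) is Glimm's dichotomy for separable C*-algebras: postliminary, GCR, and type I coincide, as recorded in \cite{blackadar_operator_2006}. The equivalences with (4), (5), (6) are also consequences of Glimm's theorem. Assuming type I, the canonical surjection $\hat{A}\to\mathrm{Prim}(A)$ from Lemma~\ref{Lemma:Polish-cover} separates points, hence is a homeomorphism (being automatically continuous and open), and the Borel T- and M-structures coincide and are standard. Conversely, in the non-type I case Glimm's construction produces two inequivalent irreducible representations with the same kernel, so the map $\hat{A}\to\mathrm{Prim}(A)$ is not even injective, and simultaneously provides a Borel reduction of a non-smooth equivalence relation into the unitary equivalence relation on irreducible representations, preventing either Borel structure on $\hat{A}$ from being standard.

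For the equivalence (3)$\Leftrightarrow$(7) I would argue in two directions. The implication (7)$\Rightarrow$(3) is immediate: every continuous trace C*-algebra is type I, and the class of type I C*-algebras is closed under extensions and under the transfinite continuous unions that appear in a decomposition series. For (3)$\Rightarrow$(7), I would refine the postliminary filtration $(J_\alpha(A))_{\alpha \leq \omega_1}$ by further subdividing each liminary subquotient $J_{\alpha+1}(A)/J_\alpha(A)$ via its own transfinite recursion extracting continuous trace ideals. The key input is that every nonzero liminary separable C*-algebra $B$ has a nonzero continuous trace ideal, obtained on the open locus of $\mathrm{Prim}(B)$ where the topology is Hausdorff; passing to the quotient $B/\overline{\mathfrak{m}(B)}$ and iterating transfinitely, one assembles the required series, which exhausts $A$ at a countable ordinal by separability (the lattice of closed ideals of a separable C*-algebra has countable rank).

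The ``furthermore'' block is then short. The equivalence (a)$\Leftrightarrow$(b) is immediate from (3)$\Leftrightarrow$(4): under type I, the spaces $\hat{A}$ and $\mathrm{Prim}(A)$ are homeomorphic, so the $T_1$ condition on either side is the same. The equivalence (b)$\Leftrightarrow$(c) is standard and appears in \cite[IV.1.3.1]{blackadar_operator_2006}: under type I, $\mathrm{Prim}(A)$ is $T_1$ exactly when every primitive ideal is maximal, which is in turn equivalent to every irreducible representation being onto the compacts, i.e., to $A$ being CCR. The main obstacle in a fully self-contained account is the ``dense continuous trace ideal'' step in (3)$\Rightarrow$(7), which in the standard treatment rests on the structure theory of liminary algebras with Hausdorff spectrum going back to Dixmier and Douady; everything else is either a citation of Glimm or a short deduction from the already established equivalences.
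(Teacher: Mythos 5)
Your proposal is correct and follows essentially the same route as the paper, which simply cites the classical sources for these equivalences (\cite[IV.1.5.7, IV.1.5.12]{blackadar_operator_2006}, \cite[Theorem 6.9.7]{pedersen_algebras_1979} for (1)--(7), and \cite[Theorem 4]{glimm_type_1961} for (a)--(c)); your sketch merely unpacks the content of those citations. The details you supply (Glimm's dichotomy for (1)--(6), the transfinite extraction of continuous trace ideals for (3)$\Rightarrow$(7), and the maximality argument for the furthermore block) are all accurate renditions of the standard arguments.
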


\begin{proof}
The equivalence of (1)---(7) is the content of \cite[IV.1.5.7, IV.1.5.12]%
{blackadar_operator_2006} and \cite[Theorem 6.9.7]{pedersen_algebras_1979};
see also \cite[IV.1.5.7]{blackadar_operator_2006}. The equivalence of
(a)---(c) is the content of \cite[Theorem 4]{glimm_type_1961}.
\end{proof}

The class of C*-algebras with no infinite-dimensional irreducible
representations has been investigated in \cite{courtney_elements_2019}.
Since every such a C*-algebra is necessarily liminary, we introduce the
following:

\begin{definition}
A separable C*-algebra is \emph{preliminary }if it has no
infinite-dimensional irreducible representation.
\end{definition}

In the \emph{unital }case, a liminary C*-algebra is necessarily preliminary 
\cite[Section IV.1.3]{blackadar_operator_2006}. If $A$ is any preliminary
nontrivial separable C*-algebra, then $A\otimes K\left( H\right) $ is
liminary and not preliminary.

\subsection{Scattered C*-algebras}

The class of \emph{scattered }C*-algebras has originally been introduced by
Jensen \cite{jensen_scattered_1977,jensen_scattered_1978}. A separable
C*-algebra $A$ is \emph{scattered }if the dual space $\hat{A}$ is a \emph{%
scattered} topological space; see \cite[Corollary 3]{jensen_scattered_1978}.
As in the commutative case, separable scattered C*-algebras admit numerous
equivalent characterization.

\begin{proposition}
\label{Proposition:scattered}Let $A$ be a separable C*-algebra. The
following assertions are equivalent:

\begin{enumerate}
\item $A$ is scattered, i.e., the dual space $\hat{A}$ is scattered;

\item $A$ admits a decomposition series $\left( E_{\alpha }\right) _{\alpha
<\omega _{1}}$ with elementary subquotients;

\item $A$ is a type $I$ AF C*-algebra and $\mathrm{Prim}\left( A\right) $ is
scattered;

\item $A$ is a type $I$ C*-algebra and $\mathrm{Prim}\left( A\right) $ is
countable;

\item the dual space $\hat{A}$ is countable.
\end{enumerate}
\end{proposition}

\begin{proof}
(1)$\Rightarrow $(2) This is proved in \cite[Theorem 2]%
{jensen_scattered_1978};

(2)$\Rightarrow $(3) It follows by induction on the length of the
decomposition series, that $\mathrm{Prim}\left( A\right) $ is scattered and $%
A$ is type $I$ and AF;

(3)$\Rightarrow $(4) This follows considering the\ canonical decomposition
series of $A$ with liminary subquotients;

(4)$\Rightarrow $(5) Since $A$ is type $I$, the canonical map $\hat{A}%
\rightarrow \mathrm{Prim}\left( A\right) $ is a homeomorphism \cite[Theorem
IV.1.5.7]{blackadar_operator_2006};

(5)$\Rightarrow $(1) This is proved in \cite[Theorem 3.1]%
{jensen_scattered_1978}.
\end{proof}

\begin{lemma}
\label{Lemma:extensions}Let $A$ be a separable C*-algebra, $B\subseteq A$ be
a C*-subalgebra, and $J\subseteq A$ be a closed ideal.

\begin{enumerate}
\item If $A$ is liminal (respectively, postliminal), then so are $B$ and $%
A/J $;

\item If $A$ and $A/J$ are postliminal, then so is $A$;

\item $A$ is scattered if and only if $J$ and $A/J$ are scattered.
\end{enumerate}
\end{lemma}

\begin{proof}
(1) See \cite[IV.1.3.10]{blackadar_operator_2006} for the conclusion about $%
B $. The one about $A/J$ is obvious.

(2) This follows from the fact that the restriction to $J$ of an irreducible
representation of $A$ is either zero or an irreducible representation of $J$ 
\cite[II.6.1.6]{blackadar_operator_2006}.

(3) This is the content of \cite[Proposition 2.4]{jensen_scattered_1977}.
\end{proof}

If $X$ is a scattered locally compact Polish space, then the C*-algebra $%
C_{0}\left( X\right) $ of continuous complex-valued functions on $X$
vanishing at infinity is a commutative scattered separable C*-algebra with
(primitive) spectrum $X$. Conversely, the spectrum $X$ of a commutative
scattered separable C*-algebra $A$ is a scattered locally compact Polish
space with $C_{0}\left( X\right) \cong A$. Thus, arbitrary (noncommutative)
separable C*-algebras can be seen as the \textquotedblleft noncommutative
generalization\textquotedblright\ of scattered locally compact Polish spaces.

\section{Some topology notions\label{Section:topology}}

In this section we recall some notions from topology, as can be found in 
\cite{kechris_classical_1995,munkres_topology_2000}, and their relations
with each other. We also introduce the class of topological spaces that we
call \emph{Fell spaces }and recall some of their properties.

\subsection{Sobriety}

Let $X$ be a topological space. A closed subspace of $X$ is \emph{%
irreducible }if it is nonempty and cannot be written in a nontrivial way as
a union of closed sets \cite[Definition 1.9.2]{borceux_handbook_1994}.

\begin{definition}
\label{Definition:sober}A topological space $X$ is \emph{sober }if for each
irreducible subspace $C$ of $X$ there exists a unique $x\in X$ such that $C$
is the closure of $\left\{ x\right\} $; see \cite[Definition 1.9.1 and Lemma
1.9.4]{borceux_handbook_1994}.
\end{definition}

Each Hausdorff space is sober, and each sober space is $T_{0}$, while the
properties of being sober and $T_{1}$ are incomparable. Sober spaces are
precisely the topological spaces that correspond to \emph{locales with
enough points }\cite[Section IX.1]{mac_lane_sheaves_1994}.

\subsection{Local compactness}

Recall that a topological space is \emph{locally compact }if each point has
a basis of neighborhoods consisting of compact (not necessarily closed)
sets. A second countable locally compact space admits a countable basis
consisting of interiors of compact sets \cite[Section 3]{lazar_quotient_2010}%
. The argument in the following remark was suggested by Ronnie Chen.

\begin{remark}
\label{Remark:Ronnie}Let $X$ be a locally compact second countable space.
There exists a countable collection $\mathcal{B}$ of compact subsets of $X$
such that for every open set $U\subseteq X$ there exists $\mathcal{F}%
\subseteq \mathcal{B}$ such that, letting $\mathcal{U}$ be the collection of 
\emph{interiors }of elements of $\mathcal{F}$, one has that%
\begin{equation*}
U=\bigcup \mathcal{F}=\bigcup \mathcal{U}\text{.}
\end{equation*}%
Indeed, one can fix a countable basis of open sets $\left\{ U_{n}:n\in 
\mathbb{N}\right\} $ of $X$. For every $n\in \mathbb{N}$, and $x\in U_{n}$,
there exists a compact neighborhood $K_{n,x}$ of $x$ contained in $U_{n}$.
Since $X$ is second countable and, particularly, Lindel\"{o}f, for every $%
n\in \mathbb{N}$ there exists a sequence $\left( x_{n,k}\right) _{k\in 
\mathbb{N}}$ in $U_{n}$ such that setting 
\begin{equation*}
K_{n,k}:=K_{n,x_{k}}\text{,}
\end{equation*}%
one has that%
\begin{equation*}
\left\{ K_{n,k}:k\in \omega \right\}
\end{equation*}%
is a cover of $U_{n}$. Define then%
\begin{equation*}
\mathcal{B}:=\left\{ K_{n,k}:n,k\in \omega \right\} \text{.}
\end{equation*}
\end{remark}

The \emph{Hofmann--Lawson duality} \cite[Theorem 6.4.3]{picado_frames_2012}
asserts in particular that the locally compact \emph{sober }spaces\emph{\ }%
are precisely the spaces of points of \emph{locally compact locales }\cite%
{hyland_function_1981}. By definition, a locale $X$ is locally compact if
its frame of open sets $\mathcal{O}\left( X\right) $ is a \emph{continuous
lattice }\cite{townsend_categorical_2006}. Notice that such a locale
necessarily \emph{has enough points }\cite[Theorem 6.3.3]{picado_frames_2012}%
.

\begin{definition}
\label{Definition:Fell}A \emph{Fell space }is a second countable locally
compact sober space.
\end{definition}

\subsection{The Baire property}

We consider the following terminology about collections of subsets of a
given set; see \cite[Section 4]{halmos_measure_1950}.

\begin{definition}
Let $X$ be a set and $\mathcal{A}$ be a collection of subsets of $X$. Then $%
\mathcal{A}$ is :

\begin{itemize}
\item a \emph{ring} of sets if for $A,B\in \mathcal{A}$, $A\cap B\in 
\mathcal{A}$ and $A\setminus B\in \mathcal{A}$;

\item an \emph{algebra} of sets if it is a ring of sets and closed under
complements.
\end{itemize}
\end{definition}

Let $X$ be a topological space. Recall that a \emph{locally closed }set in $%
X $ is the intersection of an open set and a closed set. We will consider
the more general class of \emph{constructible sets}. By definition, these
are the elements of the \emph{algebra of sets }generated by the open sets 
\cite{mostowski_constructible_1969}.

Let $X$ be a topological space. A subspace $A$ of $X$ is called \emph{meager 
}if it is contained in the union of a countable family of closed sets with
empty interior.

\begin{definition}
\label{Definition:Baire}A topological space is called:

\begin{itemize}
\item a \emph{Baire space }\cite[Definition 8.2]{kechris_classical_1995} if
every nonempty open subset of $X$ is \emph{not }meager;

\item a \emph{hereditarily Baire space }\cite%
{debs_espaces_1988,bouziad_preimages_1997,bouziad_hereditary_2001} if, for
every closed subspace $C$ of $X$, $C$ is a Baire space with respect to the
subspace topology.
\end{itemize}
\end{definition}

Hereditarily Baire spaces are also called \emph{completely Baire spaces }%
\cite[Section 21.F]{kechris_classical_1995}, or \emph{Baire spaces in the
strong sense }\cite{haworth_baire_1977}. Every locally closed and, more
generally, every \emph{constructible} subspace of a hereditarily Baire is
hereditarily Baire. It is also proved in \cite{haworth_baire_1977} that a $%
G_{\delta }$ subspace of a hereditarily Baire space is hereditarily Baire;
see also \cite[Section 2.2.3]{martin_nonclassical_1999}.

From \cite[Corollary 1.9]{hofmann_note_1980}, we have the following
characterization of sober locally compact spaces.

\begin{proposition}[Hofmann]
\label{Proposition:Hofmann}Let $X$ be a second countable locally compact
topological space.\ The following assertions are equivalent:

\begin{enumerate}
\item $X$ is sober, and hence a Fell space;

\item $X$ is hereditarily Baire;

\item every closed irreducible subspace of $X$ is Baire;

\item $X$ is homeomorphic to the space of points of a locally compact locale.
\end{enumerate}
\end{proposition}

\subsection{Stratification}

We consider the natural analogue of \emph{stratification }in the context of
Fell spaces; see \cite%
{ayala_stratified_2019,haine_homotopy_2024,quinn_homotopically_1988}.

\begin{definition}
A\emph{\ stratification }of a Fell space $X$ of order $\sigma $ with strata $%
S_{\alpha }$ for $\alpha <\sigma $ is a decreasing family $\left( X^{\alpha
}\right) _{\alpha <\sigma }$ of nonempty closed subspaces of $X$ such that:

\begin{enumerate}
\item $X^{0}=X$;

\item for $\lambda $ limit, $X^{\lambda }$ is the intersection of $X^{\beta
} $ for $\beta <\lambda $;

\item for each $\alpha <\omega _{1}$, $S_{\alpha }=X^{\alpha }\setminus
X^{\alpha +1}$;

\item the intersection of $\left( X^{\alpha }\right) _{\alpha <\sigma }$ is
empty.
\end{enumerate}
\end{definition}

In this case, the \emph{entrance path poset }of the stratification is
isomorphic to the set of ordinals in $\sigma $ that are not limit.

\subsection{Hereditarily somewhere Polish spaces}

We consider a notion of topological space that \emph{locally }admits Polish
subspaces.

\begin{definition}
\label{Definition:somewhere-Polish}Let us say that a second countable
topological space $X$ is

\begin{itemize}
\item \emph{somewhere Polish }if it has a nonempty open subset that is
Polish in the subspace topology;

\item \emph{hereditarily somewhere Polish }if every nonempty closed subspace
of $X$ is somewhere Polish.
\end{itemize}
\end{definition}

For a hereditarily somewhere Polish second countable topological space $X$,
one can define by recursion a\emph{\ stratification with Polish strata}.
Thus, one can let $U_{0}$ to be a nonempty open Polish subspace of $X$, and $%
X^{1}:=X\setminus U_{0}$. One proceeds recursively in this fashion defining $%
X^{\alpha +1}=X^{\alpha }\setminus U_{\alpha }$ for some nonempty Polish $%
U_{\alpha }\subseteq X^{\alpha }$, and $X^{\lambda }$ to be the intersection
of $X^{\beta }$ for $\beta <\lambda $ when $\lambda $ is limit. As $X$ is
second countable, there exists a countable ordinal $\sigma $ such that $%
X^{\sigma }=\varnothing $. Furthermore, for every $\alpha <\sigma $, $%
U_{\alpha }=X^{\alpha }\setminus X^{\alpha +1}$ is a Polish space.

\begin{lemma}
\label{Lemma:somewhere-Polish}A hereditarily somewhere Polish second
countable topological space is a \emph{standard Borel space}, with respect
to the induced Borel structure.
\end{lemma}

\begin{proof}
Considering a stratification with Polish strata shows that a hereditarily
somewhere Polish second countable topological space is Borel-isomorphic to a
countable disjoint union of Polish spaces.
\end{proof}

\subsection{Polish presentations}

We are interested in spaces, such as the primitive spectra of C*-algebras,
that are not necessarily Polish. These spaces, however, satisfy a property
that we isolate in the following:

\begin{definition}
\label{Definition:Polish-presentation}A \emph{space with a standard Polish
presentation }is a space explicitly presented as the quotient $P/E$ of a
Polish space $P$ by an equivalence relation $E$ with the property that:

\begin{enumerate}
\item the quotient map $P\rightarrow P/E$ is (continuous and) open when $P/E$
is endowed with the quotient topology, i.e., the $E$-saturation of an open
subset of $P$ is open;

\item the quotient Borel structure on $P/E$ induced by the Borel structure
on $P$ is \emph{standard}, and in particular $E$ is a smooth equivalence
relation \cite[Exercise 18.20]{kechris_classical_1995}.
\end{enumerate}
\end{definition}

A space with a standard Polish presentation has a\emph{\ countable basis of
open sets}, and it is\emph{\ hereditarily Baire}. This follows from the fact
that every Polish space is second countable and a Baire space \cite[Theorem
8.4]{kechris_classical_1995}, and every closed subspace of a Polish space is 
$G_{\delta }$, and every $G_{\delta }$ subspace of a Polish space is Polish 
\cite[Theorem 3.11]{kechris_classical_1995}.

Obviously, every Polish space has a Polish presentation. Thus, we can think
of this notion as a generalization of the notion of Polish space, including
spaces that are not necessarily Hausdorff.

\section{Fell compactification\label{Section:Fell-compactification}}

In this section we recall a \emph{compactification} construction due to Fell.

\subsection{Fell spaces and their compactification}

Let $X$ be a Fell space. Denote by $F\left( X\right) $ the space of \emph{%
closed }subsets of $X$.\ The Fell topology on $F\left( X\right) $, as
defined by Fell in \cite{fell_hausdorff_1962}, is the topology that has as
sub-basis of open sets the sets of the form%
\begin{equation*}
\left\{ C\in F\left( X\right) :C\cap K=\varnothing \right\} \text{\quad
and\quad }\left\{ C\in F\left( X\right) :C\cap U\neq \varnothing \right\}
\end{equation*}%
where $K\subseteq X$ is compact and $U$ is open; see also \cite[3.9.2]%
{dixmier_algebras_1977}. In the Hausdorff case, this is also considered in 
\cite[Exercise 12.7]{kechris_classical_1995} and \cite[Chapter 5]%
{beer_topologies_1993}.

It is proved in \cite[Theorem 1]{fell_hausdorff_1962} that $F\left( X\right) 
$ endowed with the Fell topology is a compact Hausdorff space, which is
easily seen to be second countable when $X$ is a Fell space. By identifying
a point $x\in X$ with the closed subspace $\overline{\left\{ x\right\} }$ of 
$X$, one can regard $X$ as a subspace of $F\left( X\right) $.

\begin{definition}
Let $X$ be a Fell space. The \emph{Fell compactification }$\Phi \left(
X\right) $ is the closure of $X$ within $F\left( X\right) $. The \emph{Fell
local compactification }$\Phi _{0}\left( X\right) $ is $X\setminus \left\{
\infty \right\} $, where $\infty \in \Phi \left( X\right) $ is the empty set.
\end{definition}

It follows from the above remarks that $\Phi \left( X\right) $ is a compact
Polish space, and $\Phi _{0}\left( X\right) $ is a locally compact Polish
space.

\begin{definition}
The \emph{Fell topology }on $X$ is the subspace topology inherited from $%
\Phi \left( X\right) $, while the \emph{Jacobson topology }on $X$ is its
original topology.
\end{definition}

Notice that the Fell topology on $X$ is in general \emph{finer }than the
Jacobson topology of $X$. A different topology on $F\left( X\right) $ was
considered by Michael in \cite{michael_topologies_1951}. This has a
sub-basis of open sets consisting of sets of the form%
\begin{equation*}
\left\{ C\in F\left( X\right) :C\cap K=\varnothing \right\} \text{\quad
and\quad }\left\{ C\in F\left( X\right) :C\cap U\neq \varnothing \right\}
\end{equation*}%
as above, where $U$ is open and $K$ is \emph{closed} (rather than compact)
in $X$.

\subsection{One-point compactification}

Let $X$ be a Fell space. If $X$ is compact, define $X^{\dagger }=X$. If $X$
is not compact, define its one-point compactification $X^{\dagger }$ to be
the space $X\cup \left\{ \infty \right\} $ where $X$ is an open subspace and
the neighborhoods of $\infty $ are the sets of the form%
\begin{equation*}
\left( X\setminus K\right) \cup \left\{ \infty \right\}
\end{equation*}%
where $K\subseteq X$ is \emph{closed }and \emph{compact}. Then we have that $%
X^{\dagger }$ is a compact Fell space. Clearly, this definition of the
one-point compactification coincides with the usual one in the Hausdorff
case (in which case it also coincides with the Fell compactification,
although they are different in general).

\subsection{Fell-continuous maps}

A sequence $\left( x_{n}\right) $ in a Fell space $X$ is \emph{%
Fell-convergent} if it converges to each of its accumulation points. Note
that the set of accumulation points is allowed to be \emph{empty}. A
(necessarily closed, possibly empty) subset of $X$ is a \emph{limit set }if
it is the set of points to which a Fell-convergent sequence converges. The
elements of $\Phi \left( X\right) $ are characterized in \cite[Lemma 2]%
{fell_hausdorff_1962} as the limits sets of Fell-convergent sequences in $X$%
. Furthermore, a sequence $\left( x_{n}\right) $ in $X$ is Fell-convergent
with limit set $C$ if and only if it converges to $C$ in $\Phi \left(
X\right) $ \cite[Lemma 2]{fell_hausdorff_1962}.

When $X$ is Hausdorff, a sequence $\left( x_{n}\right) $ is Fell-convergent
if and only if it is either convergent (when its limit set is nonempty) or
convergent to $\infty $, i.e., for every compact subspace $K$ of $X$ there
exists $n_{0}\in \mathbb{N}$ such that for all $n\geq n_{0}$, $x_{n}\notin K$%
.

\begin{definition}
\label{Definition:Fell-continuous}A map $f:X\rightarrow Y$ between Fell
spaces is:

\begin{enumerate}
\item \emph{Fell-continuous }if it maps Fell-convergent sequences with
nonempty limit set to Fell-convergent sequences with nonempty limit set;

\item \emph{boundedly Fell-continuous }if it is Fell-continuous, and for
each Fell-convergent sequence $\left( x_{n}\right) $, the sequence $\left(
f\left( x_{n}\right) \right) $ has an accumulation point;

\item $\infty $-\emph{Fell-continuous }if it maps Fell-convergent sequences
to Fell-convergent sequences;

\item \emph{Fell-proper }if and only if it is Fell-continuous, and for each
Fell-convergent sequence $\left( x_{n}\right) $ with empty limit set, the
sequence $\left( f\left( x_{n}\right) \right) $ is Fell-convergent with
empty limit set.
\end{enumerate}
\end{definition}

\begin{remark}
When $X$ is \emph{compact}, every Fell-continuous map is $\infty $%
-Fell-continuous.
\end{remark}

In order to motivate the above definitions, we spell out what they amount to
in the Hausdorff case. For a locally compact Polish space $X$, the Fell
compactification coincides with the one-point compactification $X^{\dagger }$%
. Let us say that a function $f:X\rightarrow Y$ between locally compact
Polish spaces is \emph{bounded} if it has relatively compact image. Recall
that a map $f:X\rightarrow Y$ between topological spaces is \emph{proper }if
it is continuous and such that $f^{-1}\left( K\right) $ is compact for any
compact subspace $K$ of $Y$.

\begin{lemma}
\label{Lemma:Fell-continuous-Hausdorff}Let $X,Y$ be locally compact Polish
spaces, and $f:X\rightarrow Y$ be a function. Then:

\begin{enumerate}
\item $f$ is\emph{\ }Fell-continuous\emph{\ }if and only if it is continuous;

\item $f$ is boundedly Fell-continuous if and only if $f$ is bounded and
continuous if and only if for each sequence $\left( x_{n}\right) $ in $X$
the image $\left( f\left( x_{n}\right) \right) $ has an accumulation point
in $Y$;

\item $f$ is Fell-proper if and only if $f$ is proper\emph{\ }if and only if 
$f$ extends to a continuous function $X^{\dagger }\rightarrow Y^{\dagger }$
that maps $\infty $ to $\infty $.

\item $f$ is $\infty $-Fell-continuous if and only if $f$ is continuous and
either proper or there exists $y\in Y$ such that for every $\varepsilon >0$
there exists a compact subspace $C$ of $X$ such that $\left\vert f\left(
x\right) -y\right\vert <\varepsilon $ for $x\in X\setminus C$ if and only if 
$f$ extends to a continuous function $X^{\dagger }\rightarrow Y^{\dagger }$;
\end{enumerate}
\end{lemma}

\begin{proof}
(1) This follows immediately from the fact that a sequence $\left(
x_{n}\right) $ in a locally compact Polish space is convergent if and only
if it is Fell-convergent with nonempty limit set.

(2) If $f$ is unbounded, then there exists a sequence $\left( x_{n}\right) $
in $X$ such that $\left( f\left( x_{n}\right) \right) $ is convergent to $%
\infty $. After passing to a subsequence, we can assume that $\left(
x_{n}\right) $ is Fell-convergent. Then $\left( f\left( x_{n}\right) \right) 
$ is Fell-convergent with empty limit set. If for a sequence $\left(
x_{n}\right) $ in $X$ the sequence $\left( f\left( x_{n}\right) \right) $
does not have an accumulation point, then the closure of the image of $f$ is
not (sequentially) compact.

(3) The condition that $f^{-1}\left( K\right) $ is compact for every compact
subset of $Y$ is equivalent to the assertion that every sequence $\left(
x_{n}\right) $ in $X$, if $x_{n}\rightarrow \infty $, then $f\left(
x_{n}\right) \rightarrow \infty $.

(4) The condition that there exists $y\in Y$ such that for every $%
\varepsilon >0$ there exists a compact subspace $C$ of $X$ such that $%
\left\vert f\left( x\right) -y\right\vert <\varepsilon $ for $x\in
X\setminus C$ is precisely equivalent to the assertion that every sequence $%
\left( x_{n}\right) $ in $X$ such that $x_{n}\rightarrow \infty $, $f\left(
x_{n}\right) \rightarrow y$. This is equivalent to the assertion that
extending $f$ by setting $f\left( \infty \right) =y$ produces a continuous
extension $X^{\dagger }\rightarrow Y$.
\end{proof}

\begin{remark}
Notice that when either $X$ or $Y$ is compact, every map $X\rightarrow Y$ is
bounded. When $Y$ is compact and $X$ is \emph{not }compact, no map $%
X\rightarrow Y$ is proper.
\end{remark}

\subsection{Universal properties}

Here we isolate the universal properties of the Fell compactification,
one-point compactification, and Fell local compactification.

\begin{theorem}[Fell]
\label{Theorem:Fell-compactification}Let $X,Y$ be Fell spaces, and $%
f:X\rightarrow Y$ be a function.

\begin{enumerate}
\item If $Y$ is a compact Polish space, then $f$ is $\infty $%
-Fell-continuous if and only if there exists a (necessarily unique)
continuous function $f_{\Phi }:\Phi \left( X\right) \rightarrow Y$ such that 
$f_{\Phi }\circ \Phi _{X}=f$;

\item $f$ is $\infty $-Fell-continuous if and only if there exists a
(necessarily unique) continuous function $f_{\Phi }:\Phi \left( X\right)
\rightarrow \Phi \left( Y\right) $ such that $f_{\Phi }\circ \Phi _{X}=\Phi
_{Y}\circ f$;

\item $X\mapsto \Phi \left( X\right) $ is the left adjoint of the inclusion
functor from compact Polish spaces and continuous maps into Fell spaces and $%
\infty $-Fell-continuous maps, with $X\mapsto \Phi _{X}$ as unit of the
adjunction.
\end{enumerate}
\end{theorem}

\begin{proof}
(1) This is the content of \cite[Theorem 2]{fell_hausdorff_1962}.

(2) Apply (1) to $\Phi _{Y}\circ f$.

(3) This is a reformulation of (2) considering that $\Phi \left( X\right) $
is a compact Polish space.
\end{proof}

Notice that the inclusion $\dagger _{X}:X\rightarrow X^{\dagger }$ is $%
\infty $-Fell-continuous.\ 

\begin{theorem}
\label{Theorem:unitization}Let $X,Y$ be a Fell spaces, and $f:X\rightarrow Y$
be a function.

\begin{enumerate}
\item $f$ is Fell-proper if and only if the map $X^{\dagger }\rightarrow
Y^{\dagger }$ defined by 
\begin{equation*}
f\left( x\right) =\left\{ 
\begin{array}{cc}
f\left( x\right)  & x\in X\text{;} \\ 
\infty _{Y} & x=\infty _{X}\text{;}%
\end{array}%
\right. 
\end{equation*}%
is Fell-continuous;

\item $X\mapsto X^{\dagger }$ is the left adjoint of the inclusion functor
from \emph{pointed }compact Fell-spaces and \emph{basepoint-preserving }%
Fell-continuous maps into Fell spaces and Fell-proper maps, where $%
X^{\dagger }$ has $\infty _{X}$ as distinguished point.
\end{enumerate}
\end{theorem}

\begin{proof}
This follows easily from the definition of Fell-proper map.
\end{proof}

We now consider the Fell local compactification $\Phi _{0}\left( X\right)
=\Phi \left( X\right) \setminus \left\{ \infty \right\} $ of a Fell space $X$%
.

\begin{lemma}
\label{Lemma:Fell-local-compactification}Let $X,Y$ be Fell spaces, and $%
f:X\rightarrow Y$ be a function. Then:

\begin{enumerate}
\item $f$ is Fell-continuous if and only if there exists a (necessarily
unique) continuous function $\Phi _{0}\left( f\right) :\Phi _{0}\left(
X\right) \rightarrow \Phi _{0}\left( Y\right) $ such that $\Phi _{0}\left(
f\right) \circ \Phi _{X}=\Phi _{Y}\circ f$;

\item $f$ is Fell-proper if and only if it is Fell-continuous and $\Phi
_{0}\left( f\right) $ is proper;

\item $f$ is boundedly Fell-continuous if and only if it is Fell-continuous
and $\Phi _{0}\left( f\right) $ is bounded.
\end{enumerate}
\end{lemma}

\begin{proof}
(1) It suffices to consider the case when $Y$ is Hausdorff. In this case,
for $C\in \Phi _{0}\left( X\right) $ let $\left( x_{n}\right) $ be a
Fell-convergent sequence with limit set $C$. Then define $\Phi _{0}\left(
f\right) \left( C\right) $ to be the limit set of $\left( f\left(
x_{n}\right) \right) $. As in the proof of \cite[Theorem 2]%
{fell_hausdorff_1962}, this produces a continuous extension $\Phi _{0}\left(
f\right) $ of $f$.

(2) and (3) follow from (1) together with Lemma \ref%
{Lemma:Fell-continuous-Hausdorff}.
\end{proof}

\begin{theorem}
Consider the functor $X\mapsto \Phi _{0}\left( X\right) $ from Fell spaces
to locally compact Polish spaces.

\begin{enumerate}
\item $X\mapsto \Phi _{0}\left( X\right) $ is the left adjoint of the
inclusion functor from locally compact Polish spaces and continuous maps
into Fell spaces and Fell-continuous maps;

\item $X\mapsto \Phi _{0}\left( X\right) $ is the left adjoint of the
inclusion functor from locally compact Polish spaces and proper maps into
Fell spaces and Fell-proper maps;

\item $X\mapsto \Phi _{0}\left( X\right) $ is the left adjoint of the
inclusion functor from locally compact Polish spaces and bounded continuous
maps into Fell spaces and boundedly Fell-continuous maps.
\end{enumerate}
\end{theorem}

\begin{proof}
This is a consequence of Lemma \ref{Lemma:Fell-local-compactification}.
\end{proof}

\subsection{Fell and Kuratowski}

Let $X$ be a Fell space. Suppose that $\left( F_{n}\right) $ is a sequence
of closed subsets of $X$. One defines%
\begin{equation*}
\limsup_{n}F_{n}
\end{equation*}%
to be the set of $z\in X$ such that for every neighborhood $U$ of $z$ one
has that, for infinitely many $n\in \mathbb{N}$, $F_{n}\cap U\neq
\varnothing $. Likewise, one defines%
\begin{equation*}
\liminf_{n}F_{n}
\end{equation*}%
to be the set of $z\in X$ such that for every neighborhood $U$ of $z$ one
has that, for all but finitely many $n\in \mathbb{N}$, $F_{n}\cap U\neq
\varnothing $. The sequence $\left( F_{n}\right) $ is \emph{%
Kuratowski-convergent} to $F$ if%
\begin{equation*}
\limsup_{n}F_{n}=\liminf_{n}F_{n}=F\text{.}
\end{equation*}%
Then for a Fell space $X$, a sequence $\left( F_{n}\right) $ of closed sets
converges to $F$ in $F\left( X\right) $ with respect to the Fell topology if
and only if it is Kuratowski--convergent to $F$; see \cite%
{vitolo_when_1998,mrowka_convergence_1958}.

\section{Weighted spaces\label{Section:weights}}

In this section, we introduce the notion of \emph{weight function}---or,
briefly, weight---on a Fell space.

\subsection{Weight functions}

We consider $\overline{\mathbb{N}}=\mathbb{N}\cup \left\{ \infty \right\} $
to be a topological space, with the topology that has the sets $[n,\infty ]$
as open sets, which we call the Jacobson topology. This renders $\overline{%
\mathbb{N}}$ a Fell space.

\begin{definition}
\label{Definition:weight}Let $w:X\rightarrow \overline{\mathbb{N}}$ be a
function. Then $w$ is:

\begin{itemize}
\item a \emph{weight} if the extension $w:\Phi \left( X\right) \rightarrow 
\overline{\mathbb{N}}$ defined by setting%
\begin{equation*}
w\left( C\right) :=\sum_{y\in C}w\left( y\right)
\end{equation*}%
for every limit set $C\in \Phi \left( X\right) $ is \emph{continuous }with
respect to the Fell topology on $\Phi \left( X\right) $ and the Jacobson
topology on $\overline{\mathbb{N}}$;

\item \emph{finite }if $w\left( x\right) <\infty $ for every $x\in X$.
\end{itemize}

The \emph{support }of the weight $w$ is the open set%
\begin{equation*}
X_{w}:=\left\{ x\in X:w\left( x\right) >0\right\} \text{.}
\end{equation*}%
A weight is \emph{fully supported }if its support is the whole of $X$.
\end{definition}

\begin{remark}
In Definition \ref{Definition:weight}, we adopt the convention that an empty
sum is zero, so that $w\left( \infty \right) =0$.
\end{remark}

It is obvious that the pointwise supremum of a family of weights is still a
weight. In the following lemma, we prove that the condition of being a
weight is equivalent to a seemingly weaker condition, which can be
formulated solely in terms of Fell-convergent sequences in $X$.

\begin{lemma}
\label{Lemma:weight}Let $X$ be a Fell space. Suppose that $w:X\rightarrow 
\overline{\mathbb{N}}$ is a function. The following assertions are
equivalent:

\begin{enumerate}
\item $w$ is a weight;

\item for every Fell-convergent sequence $\left( x_{n}\right) $ in $X$ with
limit set $C$,%
\begin{equation*}
\sum_{y\in C}w\left( y\right) \leq \liminf_{n}w\left( x_{n}\right) \text{.}
\end{equation*}
\end{enumerate}
\end{lemma}

\begin{proof}
(2)$\Rightarrow $(1) Observe that in particular $w:X\rightarrow \overline{%
\mathbb{N}}$ is continuous with respect to the Jacobson topology.

Suppose that $\left( C_{n}\right) $ is a sequence in $\Phi \left( X\right) $
converging to $C$ and such that $w\left( C_{n}\right) =d\in \mathbb{N}$ for
every $n\in \mathbb{N}$. Fix $\ell \geq 1$ and $y_{1},\ldots ,y_{\ell }\in C$%
. Then there exist open neighborhoods $U_{1},\ldots ,U_{\ell }$ of $%
y_{1},\ldots ,y_{\ell }$, respectively, such that%
\begin{equation*}
w|_{U_{i}}\geq w\left( y_{i}\right) 
\end{equation*}%
for $1\leq i\leq \ell $. Considering the open neighborhood%
\begin{equation*}
\left\{ D\in \Phi \left( C\right) :\forall i\in \left\{ 1,2,\ldots ,\ell
\right\} \text{, }U_{i}\cap D\neq \varnothing \right\} 
\end{equation*}%
of $C$, we observe that we can assume that there exist $y_{n}^{\left(
i\right) }\in U_{i}\cap C_{n}$ for all $n\in \mathbb{N}$ and $i\in \left\{
1,2,\ldots ,\ell \right\} $. After passing to a subsequence, we can assume
that there is a partition $A_{1},\ldots ,A_{m}$ of $\left\{ 1,\ldots ,\ell
\right\} $ and, for each $1\leq k\leq m$, $i_{k}\in \left\{ 1,2,\ldots ,\ell
\right\} $ such that:

\begin{enumerate}
\item $(y_{n}^{\left( i_{k}\right) })$ has all the elements of $A_{k}$ as
limits;

\item $y_{n}^{i_{1}},\ldots ,y_{n}^{i_{m}}$ are pairwise distinct.
\end{enumerate}

After passing to subsequences, we can assume that $(y_{n}^{\left(
i_{k}\right) })$ is Fell-convergent for every $k\in \left\{ 1,2,\ldots
,m\right\} $. By hypothesis, we have%
\begin{equation*}
\sum_{z\in A_{k}}w\left( z\right) \leq \liminf_{n}w(y_{n}^{(k_{i})})
\end{equation*}%
and hence%
\begin{eqnarray*}
\sum_{i=1}^{\ell }w\left( y_{i}\right) &\leq &\sum_{k=1}^{m}\sum_{z\in
A_{k}}w\left( z\right) \leq \sum_{k=1}^{m}\liminf_{n}w(y_{n}^{(i_{k})}) \\
&\leq &\liminf_{n}\sum_{k=1}^{m}w(y_{n}^{\left( i_{k}\right) })\leq
\liminf_{n}w\left( C_{n}\right) \leq d\text{.}
\end{eqnarray*}%
Since this holds for every $\ell \in \mathbb{N}$ and $y_{1},\ldots ,y_{\ell
}\in C$ we conclude that $w\left( C\right) \leq d$.
\end{proof}

\subsection{Weighted spaces}

We now introduce the notion of weighted space.

\begin{definition}
A \emph{weighted space }is a pair $\left( X,W\right) $ where:

\begin{itemize}
\item $X$ is a Fell space;

\item $W$ is a standard Borel space of weights on $X$;

\item the canonical pairing%
\begin{equation*}
W\times X\rightarrow \overline{\mathbb{N}}\text{, }\left( w,t\right) \mapsto
w\left( t\right)
\end{equation*}%
is Borel.
\end{itemize}

A weighted space $\left( X,W\right) $ is \emph{finitely weighted }if every
weight $w$ in $W$ is a finite weight.
\end{definition}

Let $\left( X,W\right) $ be a weighted space. A \emph{local section }around $%
t\in X$ is a pair $\left( U,w\right) $ where $w\in W$ and $U$ is an open
neighborhood of $t$ such that $w|_{U}$ is a finite weight and $w\left(
t\right) \neq 0$.

\begin{definition}
\label{Definition:liminary-space}A\emph{\ weighted space }$\left( X,W\right) 
$ is:

\begin{itemize}
\item a \emph{Fell weighted space }if\emph{\ }for every $t\in X$ and local
section $\left( U,w\right) $ around $t$ there exists $v\in W$ with support
contained in $U$ such that $v\left( t\right) =w\left( t\right) $ and $v\leq
w $;

\item \emph{liminary }if it has closed points and for every $t\in X$ there
exists a local section around $t$;

\item \emph{preliminary }if it is liminary and finitely weighted;

\item \emph{postliminary }if for every closed subspace $C$ of $X$ there
exists an open set $U$ such that $\left( U,W|_{U}\right) $ is liminary.
\end{itemize}
\end{definition}

Recall the definition of (hereditarily) somewhere Polish space from
Definition \ref{Definition:somewhere-Polish}; see also Lemma \ref%
{Lemma:somewhere-Polish}.

\begin{lemma}
\label{Lemma:postliminary-standard}Let $\left( X,W\right) $ be a
postliminary weighted space. Then $X$ is hereditarily somewhere Polish,
whence a standard Borel space.
\end{lemma}

\begin{proof}
After replacing $X$ with a closed subspace, it suffices to prove that $X$ is
somewhere Polish. After replacing $X$ with a nonempty open subset, we can
assume that $X$ is liminary. Fix $t\in X$. By hypothesis there exists an
open neighborhood $U$ of $t$ and $w\in W$ such that $w|_{U}$ is finite and
nonzero.\ Since $X$ is hereditarily Baire, there exists $n\in \mathbb{N}$
and a nonempty open subset $V$ of $U$ such that $w|_{V}$ is constantly equal
to $n$. By definition of weight, this implies that $V$ is a Polish space.
\end{proof}

\section{Trees\label{Section:trees}}

In this section we recall some notions about well-founded relations and
trees.

\subsection{Well-founded relations}

A relation $R$ on a nonempty set $X$ is \emph{well-founded} if every
nonempty subset of $X$ has an $R$-minimal element. In this case, one can
define the $R$-rank $\mathrm{rk}_{R}\left( x\right) $ of an element $x$ of $%
X $ by (well-founded) recursion.\ Thus, we have that $\mathrm{rk}_{R}\left(
x\right) =0$ if and only if $x$ is $R$-minimal, and%
\begin{equation*}
\mathrm{rk}_{R}\left( x\right) =\mathrm{sup}\left\{ \mathrm{rk}_{R}\left(
y\right) +1:yRx\right\}
\end{equation*}%
otherwise. Then%
\begin{equation*}
\mathrm{rk}\left( R\right) :=\mathrm{sup}\left\{ \mathrm{rk}\left( x\right)
+1:x\in X\right\} \text{.}
\end{equation*}%
In what follows, we will consider mainly well-founded relations that are
(partial) orders, i.e., reflexive, symmetric, and transitive.

\subsection{Trees}

An ordered set $\left( P,\leq \right) $ is \emph{directed} if for every $%
x,y\in P$ there exists $z\in P$ such that $x\leq z$ and $y\leq z$. A \emph{%
forest} is an ordered set $\left( T,\leq \right) $ such that, for every $%
s\in T$, the set%
\begin{equation*}
T_{\geq s}=\left\{ t\in T:s\leq t\right\}
\end{equation*}%
of elements above $s$ is a \emph{finite linear order}. A countable (rooted) 
\emph{tree} is a countable forest that is \emph{directed}.\ This is
equivalent to the assertion that $T$ has a largest element, called the \emph{%
root}. A tree is \emph{well-founded} if it is so as an ordered set, in which
case its rank is defined as above. The leaves of a well-founded tree are its
minimal elements, i.e., the nodes of rank $0$. A countable tree is \emph{%
infinitely branching} if every node that is not a leaf has infinitely many
immediate predecessors (children). It is easily seen that any \emph{forest}
is a disjoint union of trees.

\subsection{Topology}

Given a tree, we define its canonical \emph{tree topology}.

\begin{definition}
\label{Definition:shrub-topology}Let $B$ a tree. A subset $O$ of $T$ is 
\emph{open }if for every $x\in O$, $O$ contains the initial segment $T_{\leq
t}$ for all but finitely many children $t$ of $x$.
\end{definition}

Observe that, by definition, any downward-closed set is open. Given a
sequence $\left( t_{n}\right) $ in $T$, we say $t=\limsup_{n}t_{n}$ if and
only if $t$ is the least element of the set%
\begin{equation*}
\left\{ s\in T:\exists n_{0}\in \mathbb{N}\text{ such that }\forall n\geq
n_{0}\text{, }t_{n}\leq s\right\}
\end{equation*}%
of elements of $T$ that majorize some tail of $\left( t_{n}\right) $.
Observe that if $T_{0}$ is a subtree of $T$, then its tree topology is
precisely the subspace topology inherited from $T$. The following result is
easily proved by induction, and can also be seen as a consequence of the
classification of scattered compact Polish spaces \cite[Theorem 6.4]%
{kechris_classical_1995}.

\begin{lemma}
\label{Lemma:shrub-topology}Let $T$ be a countable well-founded
infinitely-branching tree of rank $\rho +1<\omega _{1}$. Then:

\begin{enumerate}
\item $T$ is a countable compact Polish space homeomorphic to $\omega ^{\rho
}+1$;

\item a sequence $\left( t_{n}\right) $ in $T$ converges to $t$ if and only
if $t=\limsup_{n}t_{n}$.
\end{enumerate}
\end{lemma}

\subsection{Codes}

One can code rooted trees as points of a Polish space; see also \cite[Lemma
34.11]{kechris_classical_1995} for the case of linear orders. Thus, let $%
\mathrm{Tree}$ be the closed subspace of $2^{\omega \times \omega }$
comprising the $D:\omega \times \omega \rightarrow \left\{ 0,1\right\} $
such that%
\begin{equation*}
V_{D}:=\left\{ i\in \omega :D\left( i,i\right) =1\right\}
\end{equation*}%
is a subset of $\omega $ containing $0$, and%
\begin{equation*}
E_{D}:=\left\{ \left( i,j\right) \in V_{D}\times V_{D}:i\neq j\text{ and }%
D\left( i,j\right) =1\right\}
\end{equation*}%
is a binary relation on $V_{D}$ such that $T_{D}:=\left( V_{D},E_{D}\right) $
is a tree. Then the subspace $\mathrm{WFTree}$ of $D\in \mathrm{Tree}$ such
that $T_{D}$ is well-founded is a (complete) co-analytic set; see \cite[%
Section 32.A]{kechris_classical_1995}.

\section{Ranks\label{Section:ranks}}

In this section we recall the notion of co-analytic rank and introduce the
rank of a weight function on a Fell space.

\subsection{Co-analytic ranks}

Let $X$ be a standard Borel space, and $A\subseteq X$ be a co-analytic
subset. Recall that a \emph{co-analytic rank} on $A$ is a function $\varphi
:X\rightarrow \omega _{1}\cup \left\{ \infty \right\} $ such that%
\begin{equation*}
A:=\left\{ x\in X:\varphi \left( x\right) \in \omega _{1}\right\}
\end{equation*}%
and there exist analytic relations $\leq _{\varphi }$ and $<_{\varphi }$ on $%
X$ such that, for $y\in A$,%
\begin{equation*}
\varphi \left( x\right) \leq \varphi \left( y\right) \Leftrightarrow x\leq
_{\varphi }y
\end{equation*}%
and%
\begin{equation*}
\varphi \left( x\right) <\varphi \left( y\right) \Leftrightarrow x<_{\varphi
}y\text{;}
\end{equation*}%
see \cite[Section 34.B]{kechris_classical_1995}.

\subsection{The rank of a weight function}

Let $X$ be a Fell space, and $w$ be a\emph{\ fully supported finite }weight
function on $X$. We define \emph{canonical stratification} of $X$ associated
with $w$. Define $X_{n}$ to be the open subspace of $X$ comprising the
points that have a neighborhood $U$ such that $w|_{U}$ is constantly equal
to $n\in \mathbb{N}$. Let $U_{0}$ to be the union of $X_{n}$ for $n\in 
\mathbb{N}$. Then $U_{0}$ is the largest locally open subspace of $X$ where $%
w$ is locally constant, and must be nonempty since $X$ is a Baire space.
Furthermore $X_{n}$ is a clopen subspace of $U_{0}$. One can define $%
X^{1}:=X\setminus U_{0}$, $X^{\alpha +1}=\left( X^{\alpha }\right) ^{1}$,
and $X^{\lambda }$ to be the intersection of $X^{\beta }$ for $\beta
<\lambda $ for $\lambda $ limit. For every $\alpha <\omega _{1}$, the
restriction of $w$ to the stratum $S^{\alpha }:=X^{\alpha }\setminus
X^{\alpha +1}$ is \emph{locally constant}. In particular, $S^{\alpha }$ is
Polish, and $X$ has closed points.

\begin{definition}
\label{Definition:weight-rank}Let $X$ be a Fell space with fully supported
finite weight $w$. Adopt the notation above.

\begin{itemize}
\item The\emph{\ order }$o\left( w\right) $ of $w$ is the least $\sigma
<\omega _{1}$ such that $X^{\sigma }=\varnothing $.

\item If $\sigma $ is limit, then the \emph{rank} $r\left( w\right) $ of $w$
is $\omega \sigma $.

\item If $\sigma $ is successor, then the\emph{\ rank} $r\left( w\right) $
of $w$ is 
\begin{equation*}
\mathrm{sup}\left\{ \omega \left( \sigma -1\right) +n:n\in \omega \text{ and 
}X_{n}^{\sigma -1}\neq \varnothing \right\} \text{;}
\end{equation*}

\item The \emph{order }$o_{w}\left( x\right) $ of $x\in X$ with respect to $%
w $ is $\alpha <\omega _{1}$ such that $x\in S^{\alpha }$;

\item The \emph{rank }$r_{w}\left( x\right) $ of $x\in X$ with respect to $w$
is $\omega \alpha +n$ for $\alpha <\omega _{1}$ and $n\in \mathbb{N}$ such
that $x\in S^{\alpha }\cap X_{n}^{\alpha }$.
\end{itemize}
\end{definition}

If $w$ is an arbitrary finite weight, we define the notions above in
relation to the restriction to its support.\ If $w$ is not finite, then we
define its rank and order to be $\infty $.

\subsection{Witnesses}

Let $X$ be a Fell space, and $w$ a weight on $X$. Let also $\mathcal{B}$ be
a countable open basis for $X$. We define the notion of \emph{witness} for
the order of $w$. For $\alpha <\omega _{1}$, let $T_{\alpha }$ be the
infinitely-branching well-founded rooted tree of rank $\alpha $ with root $%
r_{\alpha }$. We also let $F_{\alpha }$ be the infinitely-branching
well-founded forest of rank $\alpha $ with infinitely many connected
components. For every node $v$ of $T_{\alpha }$ or $F_{\alpha }$, we let $%
\left( v\smallfrown n\right) _{n\in \mathbb{N}}$ be an enumeration of its
children.

\begin{definition}
\label{Definition:point-witness}Fix $x\in X$. A \emph{point witness }for $%
\alpha \leq o_{w}\left( x\right) $ is a function $\xi :T_{\alpha
}\rightarrow X$, $v\mapsto \xi _{v}$ such that:

\begin{enumerate}
\item $\xi _{r_{\alpha }}=x$;

\item for every node $v$ in $T_{\alpha }$, the sequence $\left( \xi
_{v\smallfrown n}\right) _{n\in \mathbb{N}}$ converges to $\xi _{v}$;

\item for every node $v$ in $T_{\alpha }$ and $k\in \mathbb{N}$ there exists 
$n_{0}\in \mathbb{N}$ such that for $n\geq n_{0}$, $w\left( \xi
_{v\smallfrown n}\right) \geq k$.
\end{enumerate}
\end{definition}

It can easily be proved by induction that for $x\in X$ and $\alpha <\omega
_{1}$, $\alpha \leq o_{w}\left( x\right) $ if and only if there exists a
point witness for $\alpha \leq o_{w}\left( x\right) $.

\begin{definition}
\label{Definition:set-witness}A \emph{set witness }for $\alpha \leq o\left(
w\right) $ is a function $\beta :F_{\alpha }\rightarrow \mathcal{B}$, $%
v\mapsto \beta _{v}$ such that, setting%
\begin{equation*}
\delta _{v}:=\beta _{v}\setminus \bigcup \left\{ \beta _{w}:w\text{ is a
descendent of }v\text{ in }F_{\alpha }\right\} \text{:}
\end{equation*}

\begin{enumerate}
\item for every node $v$ in $F_{\alpha }$, $w$ is constant on $\delta _{v}$;

\item for every node $v$ in $F_{\alpha }$ and $k\in \mathbb{N}$, $\delta
_{v} $ is contained in the closure of the union of 
\begin{equation*}
\left\{ \delta _{v\smallfrown n}:n\in \mathbb{N}\text{ and }w|_{\delta
_{v\smallfrown n}}\geq k\right\} \text{.}
\end{equation*}
\end{enumerate}

A \emph{set witness }for $\alpha =o\left( w\right) $ is a set witness for $%
o\left( w\right) \leq \alpha $ as above such that furthermore:

\begin{enumerate}
\setcounter{enumi}{2}

\item $\left\{ \delta _{v}:v\in F_{\alpha }\right\} $ is a cover of $X$;
\end{enumerate}
\end{definition}

It can be easily proved by induction that $o\left( w\right) \leq \alpha $ if
and only if there exists a set witness for $o\left( w\right) \leq \alpha $,
and likewise $o\left( w\right) =\alpha $ if and only if there exists a set
witness for $o\left( w\right) =\alpha $.

\subsection{The rank of weights}

Let $\left( X,W\right) $ be a preliminary space. For $w\in W$ define its
rank $\rho \left( w\right) $ and its order $o\left( w\right) $ as in
Definition \ref{Definition:weight-rank}.

\begin{proposition}
\label{Proposition:bounded-Pedersen-rank}Suppose that $\left( X,W\right) $
is a preliminary space. Then:

\begin{enumerate}
\item $w\mapsto \rho \left( w\right) $ and $w\mapsto o\left( w\right) $
define co-analytic ranks on $W$;

\item there exists $\gamma <\omega _{1}$ such that $\rho \left( w\right)
\leq \gamma $ for every $w\in W$;
\end{enumerate}
\end{proposition}

\begin{proof}
(1) It suffices to prove the claim for $w\mapsto o\left( w\right) $. As in
the proof of \cite[Lemma 31.11]{kechris_classical_1995}, it suffices to
prove that there exist analytic relations 
\begin{equation*}
R,S\subseteq \mathrm{Tree}\times W
\end{equation*}%
such that:

\begin{enumerate}[label=(\alph*)]

\item for $D\in \mathrm{Tree}$ and $w\in W$,%
\begin{equation*}
R\left( D,w\right) \Leftrightarrow D\in \mathrm{WFTree}\text{ and }\mathrm{rk%
}\left( T_{D}\right) \leq o\left( w\right) \text{;}
\end{equation*}

\item for $D\in \mathrm{WFTree}$ and $w\in W$,%
\begin{equation*}
S\left( D,w\right) \Leftrightarrow \mathrm{rk}\left( T_{D}\right) =o\left(
w\right) \text{.}
\end{equation*}
\end{enumerate}

These relations are easily defined using set witnesses and point set
witnesses as in Definition \ref{Definition:point-witness} and Definition \ref%
{Definition:set-witness}.

(2) This follows from (1) and the Boundedness Theorem for Coanalytic Ranks 
\cite[Theorem 31.1]{kechris_classical_1995}.
\end{proof}

\subsection{Fell and Pedersen ranks}

We now define the \emph{Fell and Pedersen rank} of a Fell weighted space.

\begin{definition}
Let $\left( X,W\right) $ be a Fell weighted space.

\begin{itemize}
\item For a local section $\left( U,w\right) $, define the\emph{\ rank }$%
r\left( U,w\right) $ to be the rank of the finite weight $w|_{U}$;

\item For $t\in X$, the\emph{\ rank} $r(t)$ is the least of the ranks of the
local sections around $t$. By convention, if there are no local sections
around $t$, then $r\left( t\right) =\infty $;

\item The \emph{\ Fell rank }$r_{F}\left( X,W\right) $ is the supremum of
the ranks of its points;

\item The \emph{Pedersen rank }$r_{P}\left( X,W\right) $ is the supremum of
the ranks of its weights.
\end{itemize}
\end{definition}

We can relate these notions as follows.

\begin{lemma}
\label{Proposition:Fell-Pedersen-rank}Let $\left( X,W\right) $ be a Fell
weighted space, $t\in X$, and $\alpha <\omega _{1}$. The following
assertions are equivalent:

\begin{enumerate}
\item the\ rank of $t$ is at most $\alpha $;

\item there exists a finite weight $v\in W$ of rank at most $\alpha $ such
that $v\left( t\right) \neq 0$.
\end{enumerate}
\end{lemma}

\begin{proof}
(1)$\Rightarrow $(2) Suppose that $t\in X$ has rank at most $\alpha $. By
definition, this means that exists a local section $\left( U,w\right) $
around $t$ of rank at most $\alpha $. Thus, the weight $w|_{U}$ is finite
and has rank at most $\alpha $. Since $\left( X,W\right) $ is a Fell
weighted space, there exists a $v\in W$ with support contained in $U$ such
that $v\left( t\right) =w\left( t\right) \neq 0$ and $v\leq w$. Thus, $v$ is
a finite weight of rank at most $\alpha $.

(2)$\Rightarrow $(1) Suppose that $t\in X$ and $v\in W$ is a finite weight
of rank at most $\alpha $ such that $v\left( t\right) \neq 0$. Then $\left(
U,v\right) $ with $U=X$ is a local section around $t$ witnessing that the
Fell rank of $t$ is at most $\alpha $.
\end{proof}

\begin{corollary}
Let $\left( X,W\right) $ be a Fell weighted space, and $t\in X$. Then%
\begin{equation*}
r\left( t\right) =\mathrm{\min }\left\{ r\left( v\right) :v\in W\text{ is a
finite weight with }v\left( t\right) \neq 0\right\} \text{.}
\end{equation*}
\end{corollary}

Let $\left( X,W\right) $ be a Fell weighted space. For $\alpha <\omega _{1}$
define $F^{\alpha }\left( X,W\right) $ to be the set of $t\in X$ of rank 
\emph{at most} $\alpha $. Thus, we have that%
\begin{equation*}
t\in F^{\alpha }\left( X,W\right) \Leftrightarrow \text{there is a finite
weight }v\in W\text{ with }r\left( v\right) \leq \alpha \text{ and }v\left(
t\right) \neq 0\text{.}
\end{equation*}%
It is clear from the definition that $F^{\alpha }\left( X,W\right) $ is the
largest open subspace of $\left( X,W\right) $ of Fell rank at most $\alpha $%
. If $\left( X,W\right) $ is liminary, then $X$ us the union of $F^{\alpha
}\left( X,W\right) $ for $\alpha <\omega _{1}$.\ Since $X$ is second
countable, there exists $\sigma <\omega _{1}$ such that $X=F^{\sigma }\left(
X,W\right) $ \cite[Theorem 6.9]{kechris_classical_1995}.

\begin{proposition}
\label{Proposition:bounded-Fell-and-Pedersen}A Fell weighted space$\ \left(
X,W\right) $ is liminary if and only if $r_{F}\left( X,W\right) <\infty $,
and preliminary if and only if $r_{P}\left( X,W\right) <\infty $.
\end{proposition}

\begin{proof}
The assertion about liminary spaces follows from the above observations,
while the assertion about preliminary spaces is a consequence of Proposition %
\ref{Proposition:bounded-Pedersen-rank}.
\end{proof}

\subsection{Morphisms}

Let $\left( X,W\right) $ and $\left( X^{\prime },W^{\prime }\right) $ be
Fell weighted spaces. A morphism $\left( X,W\right) \rightarrow \left(
X^{\prime },W^{\prime }\right) $ is a continuous surjective function $%
f:X\rightarrow X^{\prime }$ such that for every (finite) $w^{\prime }\in W$
there exists a (finite) $w\in W$ such that $w^{\prime }\circ f\leq w$. This
defines a category with Fell weighted spaces as objects.

\begin{proposition}
\label{Proposition:morphisms}Let $f:\left( X,W\right) \rightarrow \left(
X^{\prime },W^{\prime }\right) $ be a morphism between Fell weighted spaces,
and $t\in X$. Then:

\begin{enumerate}
\item $r\left( f\left( t\right) \right) \leq r\left( t\right) $;

\item $r_{F}\left( X^{\prime },W^{\prime }\right) \leq r_{F}\left(
X,W\right) $;

\item $r_{P}\left( X^{\prime },W^{\prime }\right) \leq r_{P}\left(
X,W\right) $;

\item if $\left( X,W\right) $ is finitely weighted, then so is $\left(
X^{\prime },W^{\prime }\right) $.
\end{enumerate}
\end{proposition}

\begin{proof}
(1) Let $w\in W$ be a finite weight with $w\left( t\right) \neq 0$ and $%
r\left( t\right) =r\left( w\right) $. Then there exists a finite weight $%
w^{\prime }\in W^{\prime }$ such that $w\leq w^{\prime }\circ f$. Thus we
have that%
\begin{equation*}
w^{\prime }\left( f\left( t\right) \right) \geq w\left( t\right) \neq 0
\end{equation*}%
and%
\begin{equation*}
r\left( f\left( t\right) \right) \leq r\left( w^{\prime }\right) \leq
r\left( w^{\prime }\circ f\right) \leq r\left( w\right) =r\left( t\right) 
\text{.}
\end{equation*}

(2) This follows from (1).

(3) If $w^{\prime }\in W^{\prime }$ is a finite weight, then there exists a
finite weight $w\in W$ with $w^{\prime }\circ f\leq w$. Thus,%
\begin{equation*}
r\left( w^{\prime }\right) \leq r\left( w^{\prime }\circ f\right) \leq
r\left( w\right) \leq r_{P}\left( X,W\right) \text{.}
\end{equation*}%
As this holds for every finite weight $w^{\prime }\in W^{\prime }$, the
conclusion follows.

(4) is a particular case of (3).
\end{proof}

\begin{corollary}
Fell and Pedersen rank of Fell weighted spaces are isomorphism-invariant.
\end{corollary}

\section{Spectra of C*-algebras\label{Section:spectra}}

In this section we relate the spectra of separable C*-algebras with the
topological notions introduced above. If $A$ is a C*-algebra, we fix for
every $t\in \mathrm{Prim}\left( A\right) $ an irreducible representation $%
\pi _{t}$ of $A$ with kernel $t$.

\subsection{C*-algebra spectra}

We record in the following proposition some properties of spectra of
C*-algebras. Suppose that $A$ is a separable C*-algebra. Let $P(A)$ be the 
\emph{pure state space }of $A$ \cite[Section 4.3]{pedersen_algebras_1979}.
By \cite[Proposition 4.3.2]{pedersen_algebras_1979}, $P\left( A\right) $ is
a Polish space when endowed with the w*-topology. We have a canonical map%
\begin{equation*}
P\left( A\right) \rightarrow \mathrm{Prim}\left( A\right) \text{, }\phi
\mapsto \mathrm{\mathrm{Ker}}\left( \pi _{\phi }\right)
\end{equation*}%
mapping a pure state to the kernel of the corresponding irreducible
representation.

\begin{proposition}
\label{Proposition:spectra}Let $A$ be a separable C*-algebra, and $X:=%
\mathrm{Prim}\left( A\right) $:

\begin{enumerate}
\item $X$ is a Fell space;

\item The map $P\left( A\right) \rightarrow X$, $\phi \mapsto \mathrm{%
\mathrm{Ker}}\left( \pi _{\phi }\right) $ is a Polish presentation.
\end{enumerate}
\end{proposition}

\begin{proof}
If $I$ and $J$ are distinct primitive ideals of $A$, then either $I$ is not
contained in $J$ or vice versa. This shows that $X$ is $T_{0}$. It is shown
in \cite[Proposition II.6.5.6(ii)]{blackadar_operator_2006} that $X$ is
locally compact and second countable.

This map in (2) is continuous and open by\ \cite[Theorem 4.3.3]%
{pedersen_algebras_1979}. In particular, $X$ is hereditarily Baire, whence a
Fell space by Proposition \ref{Proposition:Hofmann}. Furthermore, by \cite[%
3.9.2]{dixmier_algebras_1977}, the corresponding quotient Borel structure on 
$P\left( A\right) $ is \emph{standard}, and coincides with the Borel
structure generated by the (Polish) Fell topology on $\mathrm{Prim}\left(
A\right) $.
\end{proof}

\subsection{The space of closed subspaces of the spectrum}

Let $A$ be a separable C*-algebra. Consider the Fell space $X:=\mathrm{Prim}%
\left( A\right) $ endowed with the Jacobson topology. In this case, we can
identify the space $F\left( X\right) $ of closed subspaces of $X$ with the
space of all closed ideals of $A$. In turn, the space of closed ideals of $A$
can be identified with the space of \emph{C*-seminorms }on $A$, by mapping a
closed ideal $J$ of $A$ to the C*-seminorm%
\begin{equation*}
N_{I}:A\rightarrow \mathbb{R}_{+}\text{, }a\mapsto \left\Vert a+J\right\Vert
_{A/J}\text{;}
\end{equation*}%
see \cite[1.9.13]{dixmier_algebras_1977}. With respect to this
correspondence, the Fell topology on $F\left( X\right) $ corresponds to the
weakest topology that renders all the functions%
\begin{equation*}
I\mapsto N_{I}\left( a\right)
\end{equation*}%
continuous for $a\in A$, which is called the \emph{strong topology }in \cite%
{archbold_topologies_1987}. The \emph{Michael topology} on $F\left( X\right) 
$ corresponds to the weakest topology that makes the functions $I\mapsto
N_{I}\left( a\right) $ \emph{lower semi-continuous}, called the \emph{weak
topology }in \cite{archbold_topologies_1987}. Equivalently, it is the
topology that has the sets $F\left( X\right) \setminus \mathrm{hull}\left(
J\right) $, where $J$ ranges among the closed ideals of $A$, as sub-basis of
closed sets. In particular, the corresponding subspace topology on $X$ is
the Jacobson topology.

It is shown in \cite[1.9.13 and 3.9.2 and 3.9.3]{dixmier_algebras_1977} that 
$\mathrm{Prim}\left( A\right) $ is a $G_{\delta }$ subspace of its Fell
compactification; see also \cite[Section 2]{effros_decomposition_1963}. It
follows that $\mathrm{Prim}\left( A\right) $ is a Polish space when endowed
with the Fell topology.

\subsection{Unitization}

Let $A$ be a nonunital C*-algebra. Its \emph{unitization} $A^{\dagger }$ 
\cite[Section II.1.2]{blackadar_operator_2006} is defined to be the algebra $%
A\oplus \mathbb{C}$ with coordinate-wise addition and scalar multiplication,
and multiplication induced by the identification%
\begin{equation*}
\left( a,\lambda \right) =a+\lambda 1
\end{equation*}%
for $a\in A$ and $\lambda \in \mathbb{C}$, where $1$ acts as an identity.
The C*-norm is defined to be the norm of elements of $A^{\dagger }$ as left
multiplication operators on $A$. Equivalently, fixing a faithful
nondegenerate representation $A\subseteq B\left( H\right) $, $A^{\dagger }$
is the C*-subalgebra of $B\left( H\right) $ generated by $A$ and $1$. When $%
A $ is unital, one sets $A^{\dagger }=A$.

This construction can be seen as a noncommutative analogue of the one-point
compactification of a space. Indeed, if $A=C_{0}\left( X\right) $ for some
locally compact non compact Polish space, then $A^{\dagger }\cong C\left(
X^{\dagger }\right) $ where $X^{\dagger }$ is the one-point compactification
of $X$; see \cite[II.1.2]{blackadar_operator_2006}.

Recall that an element $a$ of a C*-algebra $A$ is:

\begin{itemize}
\item \emph{full} if the closed ideal generated by $a$ is all of $A$ \cite[%
II.5.3.10]{blackadar_operator_2006};

\item \emph{strictly full} if $\left( a-\varepsilon \right) _{+}$ is full
for some (hence, for all sufficiently small) $\varepsilon >0$ \cite%
{ortega_corona_2012}.
\end{itemize}

If $A$ is a separable C*-algebra, then $\mathrm{Prim}\left( A\right) $ is
compact if and only if $A$ has a \emph{strictly full }element; see \cite[%
Lemma 2.1]{pasnicu_topological_2025}.

\begin{proposition}
\label{Proposition:unitization}Let $A$ be a separable C*-algebra with
primitive spectrum $X$. Suppose that $X$ is not compact. Then the function $%
\phi :X^{\dagger }\rightarrow \mathrm{Prim}\left( A^{\dagger }\right) $
mapping $\infty $ to $A=\mathrm{Ker}\left( \chi \right) $, where $\chi $ is
the character $a+\lambda 1\mapsto \lambda $ is a homeomorphism.
\end{proposition}

\begin{proof}
Considering the canonical extension%
\begin{equation*}
A\rightarrow A^{\dagger }\rightarrow \mathbb{C}
\end{equation*}%
shows that $\phi $ is a bijection. Furthermore, $\phi |_{X}:X\rightarrow 
\mathrm{Prim}\left( A\right) $ is a homeomorphism, and in particular
Fell-continuous. Suppose that $\left( t_{n}\right) $ is a sequence in $X$.
Set $J_{n}:=\phi \left( t_{n}\right) \in \mathrm{Prim}\left( A\right) $.
Then the universal property of the one-point compactification shows that, if 
$\left( t_{n}\right) $ converges to $\infty $, then $\left( J_{n}\right) $
converges to $\phi \left( \infty \right) =A$.

Conversely, suppose that $\left( J_{n}\right) $ converges to $A=\mathrm{%
\mathrm{Ker}}\left( \chi \right) $. We need to prove that $\left(
t_{n}\right) $ has no accumulation point in $\mathrm{Prim}\left( A\right) $.
After passing to a subsequence, it suffices to prove that its limit set in $%
X $ is empty. Suppose that $t_{n}$ converges to $t\in X$. Then we have that $%
J_{n}$ converges to $J=\phi \left( t\right) $. This means that%
\begin{equation*}
\left\Vert x/J\right\Vert \leq \liminf_{n}\left\Vert x/J_{n}\right\Vert
\end{equation*}%
for every $x\in A$. Observe that the sets%
\begin{equation*}
\mathrm{\mathrm{Ker}}\left( \chi \right) \cup \left\{ J\in \mathrm{Prim}%
\left( A\right) :\left\Vert a/J\right\Vert <\lambda \right\}
\end{equation*}%
for $a\in A_{+}$ and $\lambda >0$ are neighborhoods of $\mathrm{\mathrm{Ker}}%
\left( \chi \right) $ in $\mathrm{Prim}\left( A^{\dagger }\right) $. Since $%
\left( J_{n}\right) $ converges to $\mathrm{\mathrm{Ker}}\left( \chi \right) 
$, this implies that, for $x\in A_{+}$,%
\begin{equation*}
\left\Vert x/J\right\Vert \leq \liminf_{n}\left\Vert x/J_{n}\right\Vert =0%
\text{.}
\end{equation*}%
This shows that $J=A$ contradicting the assumption that $J=\phi \left(
t\right) \in \mathrm{Prim}\left( A\right) $.
\end{proof}

Let $A$ be a preliminary separable C*-algebra. Then it is easily seen that
its unitization $A^{\dagger }$ is still preliminary. Indeed, $A$ can be
realized as a C*-subalgebra of a product of matrix algebras. Thus, the same
holds for its unitization, whence $A^{\dagger }$ is preliminary as well. By
ways of this observation, one can reduce some problems about preliminary
C*-algebras to the \emph{unital case}.

\subsection{Hausdorff spectrum}

Let $A$ be a separable C*-algebra with center $ZA$ and primitive spectrum $X$%
. The relation $\thickapprox $ on $X$ is defined by setting $P\thickapprox Q$
if and only if $P$ and $Q$ cannot be separated by continuous complex-valued
functions on $X$. Then the quotient of $X$ by $\thickapprox $ is a locally
compact Polish space, which is in fact the \emph{complete regularization }of 
$X$ \cite{somerset_inner_1993}.

It follows from the Dauns--Hofmann Theorem \cite{dauns_representation_1968}
that for primitive ideals $P,Q$ of $A$, $P\thickapprox Q$ if and only if $%
P\cap ZA=Q\cap ZA$. In particular, $X$ is Hausdorff if and only if $%
P\thickapprox Q$ for all $P,Q\in X$, if and only if the function $\mathrm{%
Prim}\left( A\right) \rightarrow \mathrm{Prim}\left( ZA\right) $, $P\mapsto
P\cap ZA$ is injective, in which case it is in fact a homeomorphism $\mathrm{%
Prim}\left( A\right) \rightarrow \mathrm{Prim}\left( ZA\right) $ \cite[%
Theorem 9.1]{kaplansky_normed_1949}. Notice that $P\cap ZA=ZP$ for $P\in X$
by \cite[Lemma 3.3]{aarnes_locally_1970}.

\subsection{The Pedersen weight for preliminary C*-algebras}

We now consider a canonical weight on the spectrum of a preliminary
C*-algebra.

\begin{definition}
Let $A$ be a separable preliminary C*-algebra with primitive spectrum $X$.
The \emph{Pedersen weight} of $A$ is the function%
\begin{equation*}
w_{A}:X\rightarrow \mathbb{N}\text{, }t\mapsto \mathrm{\dim }\left( \pi
_{t}\right) \text{.}
\end{equation*}
\end{definition}

\begin{proposition}
\label{Proposition:preliminary-spectrum}Let $A$ be a separable preliminary
C*-algebra with primitive spectrum $X$. Then its Pedersen weight $w_{A}$ is
a fully supported finite weight on $X$.
\end{proposition}

\begin{proof}
By Proposition \ref{Proposition:unitization}, after replacing $A$ with its
unitization, we can assume that it is unital; see also Remark \ref%
{Remark:hereditary} below. In this case, we have that%
\begin{equation*}
\dim \left( \pi _{t}\right) =\mathrm{Tr}\left( \pi _{t}\left( 1_{A}\right)
\right)
\end{equation*}%
for every $t\in X$. Thus, it follows from \cite[Theorem 2.4]%
{archbold_transition_1993} that $w_{A}$ is a weight, which is finite by
definition of preliminary C*-algebra. The conclusion now follows from
Proposition \ref{Proposition:spectra}.
\end{proof}

\subsection{The weighted spectrum}

We now considering canonical weights associated with elements of a
C*-algebra.

\begin{definition}
\label{Definition:Pedersen-function}Let $A$ be a separable C*-algebra with
primitive spectrum $X$.

\begin{itemize}
\item The \emph{Pedersen weight} corresponding to $a\in A$ is the function%
\begin{equation*}
w_{a}:X\rightarrow \overline{\mathbb{N}}\text{, }t\mapsto \mathrm{\mathrm{%
\mathrm{rank}}}\left( \pi _{t}\left( a\right) \right) \text{.}
\end{equation*}

\item The \emph{Pedersen weight} of $A$ is the function%
\begin{equation*}
w_{A}:X\rightarrow \overline{\mathbb{N}}\text{, }t\mapsto \mathrm{\mathrm{%
\mathrm{\dim }}}\left( \pi _{t}\right) \text{.}
\end{equation*}

\item The \emph{weighted }(primitive)\emph{\ spectrum }$\mathrm{Prim}\left(
A\right) $ of $A$ is the weighted space $\left( X,W\right) $ where $X$ is
the Fell space $\mathrm{Prim}\left( A\right) $ and $W$ is the collection of
weights $w_{a}$ for $a\in A$.

\item The \emph{Pedersen rank} $r_{P}\left( a\right) $ of $a\in A$ is the
rank $r\left( w_{a}\right) $ of $w_{a}$ in $\mathrm{Prim}\left( A\right) $.
\end{itemize}
\end{definition}

In the following proposition we verify that the Pedersen weight is indeed a
weight in the sense of Definition \ref{Definition:weight}.

\begin{proposition}
\label{Proposition:finite-Pedersen}Let $A$ be a separable C*-algebra, and $%
a\in A$. Then:

\begin{enumerate}
\item $w_{a}$ is a weight;

\item $w_{A}$ is a weight.
\end{enumerate}
\end{proposition}

\begin{proof}
(1) Suppose that $(s_{n})$ is a Fell-convergent sequence with limit set $C$.
We need to show that%
\begin{equation*}
\sum_{t\in C}w_{b}\left( t\right) \leq \liminf_{n}w\left( s_{n}\right)
=\liminf_{n}\mathrm{\mathrm{rank}}\left( \pi _{s_{n}}\left( b\right) \right) 
\text{.}
\end{equation*}%
We can assume that%
\begin{equation*}
\mathrm{\mathrm{rank}}(\pi _{s_{n}}\left( b\right) )=d\in \mathbb{N}
\end{equation*}%
for all $n\in \mathbb{N}$. Let $h\left( b,A\right) =\overline{b^{\ast }Ab}$
be the hereditary C*-subalgebra of $A$ generated by $b$. Recall the
correspondence between irreducible representations of $h\left( b,A\right) $
and irreducible representations of $A$ that do not vanish on $h\left(
b,A\right) $ \cite[Section 4.1]{pedersen_algebras_1979}; see Remark \ref%
{Remark:hereditary} below. By this correspondence, without loss of
generality, one can replace $A$ with $h\left( b,A\right) $. After doing so,
one obtains that $\mathrm{\dim }\left( \pi _{s_{n}}\right) =d$ for every $%
n\in \mathbb{N}$. In turn, this implies by \cite[Corollary 2.5]%
{archbold_transition_1993} that $C=\left\{ t_{1},\ldots ,t_{\ell }\right\} $
is a finite set with $\ell $ elements with $\ell \leq n$. Furthermore,%
\begin{equation*}
\sum_{i=1}^{\ell }\mathrm{\dim }\left( \pi _{t_{i}}\right) \leq n\text{.}
\end{equation*}%
Therefore,%
\begin{equation*}
\sum_{t\in C}w_{b}\left( t\right) =\sum_{i=1}^{\ell }\mathrm{\mathrm{rank}}%
\left( \pi _{t_{i}}\left( b\right) \right) \leq \sum_{i=1}^{\ell }\mathrm{%
\dim }\left( \pi _{t_{i}}\right) \leq n\text{.}
\end{equation*}%
This concludes the proof.

(2) follows from \cite[Corollary 2.5]{archbold_transition_1993} and also
from (1) considering that $w_{A}$ is the supremum of $w_{a}$ for $a\in A$.
\end{proof}

We now verify that the weighted spectrum of a separable C*-algebra is a Fell
weighted space in the sense of Definition \ref{Definition:liminary-space}.
The argument in the following lemma appears in several places in the
C*-algebra literature; see for example the proof of \cite[Lemma 16]%
{enders_commutativity_2022}.

\begin{lemma}
\label{Lemma:extend-local-section}Let $A$ be a separable C*-algebra with
primitive spectrum $X$. Fix $b\in A$, $U\subseteq X$ open, and $t\in U$.
Then there exists $a\in A$ such that $w_{a}\leq w_{b}$, $w_{a}\left(
t\right) =w_{b}\left( t\right) $, and $w_{a}\left( s\right) =0$ for $s\in
X\setminus U$.
\end{lemma}

\begin{proof}
Define $I$ to be the closed ideal of $A$ such that $X\setminus U=\mathrm{hull%
}\left( I\right) $. Since $t\in U$, $\pi _{t}$ does not vanish on $I$. Thus,
if $\left( i_{n}\right) _{n\in \mathbb{N}}$ is an approximate unit for $I$,
we have that $\left( \pi _{t}\left( i_{n}\right) \right) _{n\in \mathbb{N}}$
converges to the identity in the strong operator topology. Henceforth, there
exists $n\in \mathbb{N}$ such that $a:=i_{n}bi_{n}$ satisfies $\pi
_{t}\left( a\right) =\pi _{t}\left( b\right) $. Then we have that $a\in
I\subseteq t$ and hence 
\begin{equation*}
\pi _{s}\left( a\right) =0\leq \pi _{s}\left( b\right)
\end{equation*}%
for any $s\in X\setminus U$, and 
\begin{equation*}
w_{a}\left( s\right) =\mathrm{\mathrm{rank}}\left( \pi _{s}\left( a\right)
\right) \leq \mathrm{\mathrm{rank}}\left( \pi _{s}\left( b\right) \right)
=w_{b}\left( s\right)
\end{equation*}%
for every $s\in U$. This implies that $w_{a}\leq w_{b}$.
\end{proof}

\begin{corollary}
\label{Corollary:spectrum-is-Fell}Let $A$ be a separable C*-algebra. Then
its weighted spectrum $\mathrm{Prim}\left( A\right) $ is a Fell weighted
space.
\end{corollary}

\begin{remark}
\label{Remark:hereditary}If $B$ is a hereditary C*-subalgebra of $A$, then
every irreducible representation of $B$ is unitarily equivalent, for some
irreducible representation $\pi $ of $A$ whose kernel does not contain $B$,
to the restriction of $\pi |_{B}$ to its essential subspace \cite[%
Proposition 4.1.9]{pedersen_algebras_1979}; see also \cite[II.6.1.5]%
{blackadar_operator_2006}. Therefore, if $b\in B$ then the Pedersen rank of $%
b$ is \emph{the same} when computed in $\mathrm{Prim}\left( B\right) $ or in 
$\mathrm{Prim}\left( A\right) $.
\end{remark}

\begin{remark}
Considering that, for $b,c\in A$, and irreducible representation $\pi $,%
\begin{equation*}
\mathrm{\mathrm{rank}}\left( \pi \left( bc\right) \right) \leq \min \left\{ 
\mathrm{\mathrm{rank}}\left( \pi \left( b\right) \right) ,\mathrm{rank}%
\left( \pi \left( c\right) \right) \right\}
\end{equation*}%
we see that%
\begin{equation*}
w_{bc}\leq \min \left\{ w_{b},w_{c}\right\} \text{.}
\end{equation*}%
Thus%
\begin{equation*}
r_{P}\left( bc\right) \leq \min \left\{ r_{P}\left( b\right) ,r_{P}\left(
c\right) \right\} \text{.}
\end{equation*}
\end{remark}

\begin{lemma}
\label{Lemma:ideal}Suppose that $A$ is a separable preliminary C*-algebra,
and $J$ is an ideal of $A$. Then:

\begin{enumerate}
\item the canonical map from irreducible representations of $A/J$ to
irreducible representations of $A$ induces an isomorphism of weighted spaces
from $\mathrm{Prim}\left( A/J\right) $ to the closed subspace $\mathrm{hull}%
\left( J\right) \subseteq \mathrm{Prim}\left( A\right) $;

\item the canonical restriction map from irreducible representations of $A$
that do not vanish on $J$ to irreducible representations of $J$ induces an
isomorphism of weighted spaces from the open subspace $\mathrm{Prim}\left(
A\right) \setminus \mathrm{hull}\left( J\right) \subseteq \mathrm{Prim}%
\left( A\right) $ to $\mathrm{Prim}\left( J\right) $.
\end{enumerate}
\end{lemma}

\begin{proof}
This is an immediate consequence of \cite[II.6.1.3 and II.6.1.6]%
{blackadar_operator_2006}; see also Remark \ref{Remark:hereditary}.
\end{proof}

\subsection{Weighted spectra of liminary C*-algebras}

We now characterize liminary C*-algebras in terms of their weighted
spectrum. The following result is essentially established in the proof of 
\cite[Lemma 2.5]{fell_dual_1960}.

\begin{proposition}
\label{Proposition:finite}Let $A$ be a separable C*-algebra with primitive
spectrum $X$, and $t\in X$. Suppose that $t$ has a neighborhood $W$ in $X$
such that every $s\in W$, $\pi _{s}$ is CCR. Then there exists a positive
element $z\in A$ and a neighborhood $U$ of $t$ contained in $W$ such that:

\begin{enumerate}
\item $\pi _{t}\left( z\right) $ is a rank-one projection, and

\item the weight $w_{z}|_{U}$ is finite.
\end{enumerate}
\end{proposition}

\begin{proof}
Let $P\in K\left( H_{\pi _{0}}\right) $ be a rank-one projection. Pick a
positive element $z\in A$ such that $\pi _{t}\left( z\right) =P$. Fix $n\geq
2$ and set $\varepsilon :=1/n$. Let $F$ be a continuous non-negative
function on $\mathbb{R}$ which satisfies%
\begin{equation*}
F|_{\left( -\infty ,1-\varepsilon \right) \cup \left( 1+\varepsilon ,+\infty
\right) }\equiv 0
\end{equation*}%
and%
\begin{equation*}
F\left( 1\right) =1\text{.}
\end{equation*}%
Then we have that%
\begin{equation*}
\pi _{t}\left( F\left( z\right) \right) =F(\pi _{t}\left( z\right) )=F\left(
P\right) =P\text{.}
\end{equation*}%
By lower semi-continuity of 
\begin{equation*}
t\mapsto \left\Vert \pi _{t}\left( F\left( z\right) \right) \right\Vert 
\text{,}
\end{equation*}%
there exists a neighborhood $U$ of $\mathrm{\mathrm{Ker}}\left( \pi
_{t}\right) $ contained in $W$ such that%
\begin{equation*}
\pi _{s}\left( F\left( z\right) \right) >1-\varepsilon
\end{equation*}%
and in particular%
\begin{equation*}
\pi _{s}\left( F\left( z\right) \right) \neq 0
\end{equation*}%
for all $s\in U$. By assumption, for every $s\in W$, $\pi _{s}$ is CCR.
Thus, for $s\in U$ we have that $\pi _{s}\left( F\left( z\right) \right) $
is a \emph{compact} positive operator with all nonzero eigenvalues in $%
[1-\varepsilon ,1+\varepsilon ]$. This implies that $\pi _{s}\left( F\left(
z\right) \right) $ has finite rank. This shows that $w_{z}|_{U}$ is a finite
weight.
\end{proof}

We can use Lemma \ref{Lemma:extend-local-section} and Proposition \ref%
{Proposition:finite} to characterize liminary C*-algebras.\ Recall the
definition of liminary weighted space from\ Definition \ref%
{Definition:liminary-space}.

\begin{proposition}
\label{Proposition:characterize-liminary}Let $A$ be a separable C*-algebra.
The following assertions are equivalent:

\begin{enumerate}
\item $A$ is a liminary\ C*-algebra;

\item $\mathrm{Prim}\left( A\right) $ is a liminary weighted space.
\end{enumerate}
\end{proposition}

\begin{proof}
Recall that by Corollary \ref{Corollary:spectrum-is-Fell}, $\mathrm{Prim}%
\left( A\right) $ is a Fell weighted space.

(1)$\Rightarrow $(2) By definition, a C*-algebra is liminary if and only if
all its irreducible representations are CCR. Furthermore, the primitive
spectrum of a liminary C*-algebra has closed points. Thus, the conclusion
follows from Proposition \ref{Proposition:finite}.

(2)$\Rightarrow $(1) If $t\in X$, then by assumption there exists $a\in A$
and an open neighborhood $U$ of $t$ in $X$ such that $\left(
U,w_{a}|_{U}\right) $ is local section around $t$. Furthermore, $w_{a}\left(
t\right) =\mathrm{\mathrm{rank}}\left( \pi _{t}\left( a\right) \right) $ is
finite and nonzero.\ This shows that $\pi _{t}\left( A\right) \cap K\left(
H_{t}\right) \neq \left\{ 0\right\} $ and hence $\pi _{t}$ is GCR.

As this holds for every $t\in X$, $A$ is GCR and its spectrum has closed
points. This implies that $A$ is liminary by Proposition \ref%
{Proposition:postliminal}.
\end{proof}

\begin{corollary}
\label{Corollary:characterize-postliminary}Let $A$ be a separable
C*-algebra.\ The following assertions are equivalent:

\begin{enumerate}
\item $A$ is a postliminary C*-algebra;

\item $\mathrm{Prim}\left( A\right) $ is a postliminary weighted space.
\end{enumerate}
\end{corollary}

\begin{proof}
We have that $A$ is postliminary if and only if it has a decomposition
series with liminary subquotients. By Proposition \ref%
{Proposition:characterize-liminary} and Lemma \ref{Lemma:ideal} this is
equivalent to the assertion that $\mathrm{Prim}\left( A\right) $ has a
stratification with liminary strata. As a Fell weighted space is
postliminary if and only if it has a stratification with liminary strata,
this concludes the proof.
\end{proof}

\subsection{Weighted spectra of preliminary C*-algebras}

Preliminary C*-algebra can be characterized among liminary C*-algebras in
terms of their weighted spectrum as follows.

\begin{proposition}
\label{Proposition:characterize-preliminary}Let $A$ be a separable liminary
C*-algebra. The following assertions are equivalent:

\begin{enumerate}
\item $A$ is preliminary;

\item $\mathrm{Prim}\left( A\right) $ is a preliminary weighted space;

\item $\mathrm{sup}\left\{ w_{a}:a\in A\right\} $ is a finite weight.
\end{enumerate}
\end{proposition}

\begin{proof}
(1)$\Rightarrow $(3) If $A$ is preliminary, then for $t\in X$ we have%
\begin{equation*}
\mathrm{\mathrm{sup}}\left\{ w_{a}\left( t\right) :a\in A\right\} =\mathrm{%
\dim }\left( \pi _{t}\right) =w_{A}\left( t\right) \text{.}
\end{equation*}%
By definition of preliminary C*-algebra, the Pedersen weight $w_{A}$ of $A$
is a finite weight.

(2)$\Rightarrow $(1) If $t\in X$ is such that $\pi _{t}$ an
infinite-dimensional irreducible representation of $A$, and $a\in A$ is such
that $\pi _{t}\left( a\right) $ has infinite rank, then $w_{a}\left(
t\right) =\infty $ and $w_{a}$ is not a finite weight.
\end{proof}

\section{Fell and Pedersen rank of C*-algebras\label%
{Section:Fell-and-Pedersen}}

In this section we define the notion of Fell and Pedersen rank of a liminary
C*-algebras, in terms of the corresponding notions about its weighted
spectrum.

\subsection{Fell and Pedersen rank}

Recall the notion of Fell and Pedersen rank of a weighted space from
Definition \ref{Definition:liminary-space}.

\begin{definition}
\label{Definition:Fell-algebra}Let $A$ be a separable C*-algebra, and $%
\mathrm{Prim}\left( A\right) $ be its weighted spectrum.

\begin{enumerate}
\item the Fell rank of $A$ is the Fell rank of $\mathrm{Prim}\left( A\right) 
$;

\item the Pedersen rank of $A$ is the Pedersen rank of $\mathrm{Prim}\left(
A\right) $;

\item $A$ is type $I_{\alpha }$ if and only if its Fell rank is at most $%
1+\alpha $;

\item $A$ is $\alpha $-subhomogeneous if and only if its Pedersen rank is at
most $\alpha $.
\end{enumerate}
\end{definition}

\begin{theorem}
\label{Theorem:complete}Let $A$ be a separable C*-algebra. Then:

\begin{itemize}
\item $A$ is liminary if and only if it has type $I_{\alpha }$ for some $%
\alpha <\omega _{1}$;

\item $A$ is preliminary if and only if it is $\alpha $-subhomogeneous for
some $\alpha <\omega _{1}$.
\end{itemize}
\end{theorem}

\begin{proof}
This is a consequence of Proposition \ref%
{Proposition:bounded-Fell-and-Pedersen}, Proposition \ref%
{Proposition:characterize-liminary}, and Proposition \ref%
{Proposition:characterize-preliminary}.
\end{proof}

\subsection{Pedersen rank and hereditary subalgebras}

Let $A$ be a separable C*-algebra, and $x\in A$. The hereditary
C*-subalgebra generated by $x$ is $\overline{x^{\ast }Ax}$ \cite[Proposition
II.3.4.2]{blackadar_operator_2006}. The following characterization of the
Pedersen rank of an element generalizes \cite[IV.1.1.7]%
{blackadar_operator_2006}, which can be seen as the particular instance in
the case $\alpha =1$, and \cite[Lemma 16]{enders_commutativity_2022}, which
can be seen as the particular case when $\alpha =n<\omega $.

Recall that an element $x$ of a separable C*-algebra $A$ is \emph{abelian}
if and only if the hereditary C*-algebra $\overline{x^{\ast }Ax}$ is
commutative. This is equivalent to the assertion that $\mathrm{\mathrm{rank}}%
\left( \pi \left( x\right) \right) \leq 1$ for every irreducible
representation $\pi $ of $A$, i.e., the Pedersen weight $w_{x}$ is \emph{%
finite of rank at most} $1$. More generally, $x$ has \emph{global rank at
most }$k<\omega $ as in \cite[Section 3.2]{enders_commutativity_2022} if $%
w_{x}$ is \emph{finite of rank at most }$k$.

\begin{proposition}
Let $A$ be a separable C*-algebra and $x\in A$. Define $B$ to be the
hereditary\ C*-subalgebra of $A$ generated by $x$. Then the Pedersen rank of 
$x$ is equal to the Pedersen rank of $B$.
\end{proposition}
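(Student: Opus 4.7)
The plan is to establish both inequalities $r_P(x) \leq r_P(B)$ and $r_P(B) \leq r_P(x)$. For the first, I would observe that $x^*x \in B$ (as the norm limit of $x^*e_\lambda x$ for an approximate unit $(e_\lambda)$ of $A$), and since $\mathrm{rank}(T) = \mathrm{rank}(T^*T)$ for every bounded operator $T$, we have $w_x = w_{x^*x}$ on $\mathrm{Prim}(A)$, so $r_P(x) = r_P(x^*x)$. Combined with the Remark that the Pedersen rank of an element of $B$ is the same whether computed in $B$ or in $A$, this yields $r_P(x) \leq r_P(B)$.

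For the reverse inequality, the first step is to show that every $b \in B$ satisfies $w_b \leq w_x$ pointwise on $\mathrm{Prim}(A)$. Writing $b = \lim_n x^*a_n x$ by density of $x^*Ax$ in $B$, for an irreducible representation $\pi$ of $A$ we have $\pi(b) = \lim_n \pi(x)^*\pi(a_n)\pi(x)$ in norm, with each term of rank at most $\mathrm{rank}(\pi(x))$. Since the set of operators of rank at most $k$ is norm-closed (by continuity of singular values under norm perturbations), we conclude $\mathrm{rank}(\pi(b)) \leq \mathrm{rank}(\pi(x))$.

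The main step, and the main obstacle, is to upgrade this pointwise inequality to the rank inequality $r_P(b) \leq r_P(x)$: pointwise domination of (not necessarily continuous) functions does not in general yield rank domination. To circumvent this, I would reduce to the liminary setting. The case where $r_P(x)$ is not a countable ordinal is trivial, so assume $r_P(x) = \alpha < \omega_1$. Then $x \in P_\alpha(A) \subseteq \Pi_\alpha(A)$, and by Lemma \ref{Lemma:Fell-liminary}, $\Pi_\alpha(A)$ is a liminary ideal of $A$; since $\Pi_\alpha(A)$ is an ideal containing $x$, the hereditary subalgebra $B = \overline{x^*Ax}$ is contained in $\Pi_\alpha(A)$ and is therefore liminary. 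Identifying $\mathrm{Prim}(B)$ with the open subset $U = \mathrm{Prim}(A) \setminus \mathrm{hull}(B)$ of $\mathrm{Prim}(A)$, for any irreducible representation $\pi$ of $A$ with $\pi(x) \neq 0$ the essential subspace of the restriction of $\pi$ to $B$ is $\overline{\pi(x^*)H}$, which has dimension $\mathrm{rank}(\pi(x))$; hence the dimension function $w_B$ of $B$ corresponds to $w_x|_U$, while the Pedersen function of $b \in B$ corresponds to $w_b|_U$. By Proposition \ref{Proposition:finite-Pedersen}, both $w_b$ and $w_B$ are weight functions on $\mathrm{Prim}(B)$, so Lemma \ref{Lemma:compare} applied with $f$ the identity yields $r(w_b) \leq r(w_B) = r(w_x|_U) \leq r_P(x)$, where the last step is Lemma \ref{Lemma:finite-order}(3). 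Taking the supremum over $b \in B$ gives $r_P(B) \leq r_P(x)$.
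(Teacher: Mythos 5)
Your proof is correct, and while the overall skeleton (two inequalities, the bijection between $\mathrm{Prim}(B)$ and $\mathrm{Prim}(A)\setminus \mathrm{hull}(B)$, and Lemma \ref{Lemma:compare}) matches the paper's, several of your ingredients are genuinely different. For $r_P(x)\leq r_P(B)$ you use $x^*x\in B$ together with $\mathrm{rank}(T)=\mathrm{rank}(T^*T)$ and the remark that Pedersen ranks of elements of $B$ agree whether computed in $B$ or in $A$; the paper instead pushes $w_x$ through the map $\mathrm{Ker}(\rho)\mapsto \mathrm{Ker}(\rho_B)$ and has to deal separately with the points of $\mathrm{hull}(B)$, where $w_x$ vanishes. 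Your version is shorter and cleaner. For $r_P(B)\leq r_P(x)$, your main departure is the reduction to the liminary case: since Lemma \ref{Lemma:compare} is stated for \emph{weight} functions (and for general functions pointwise domination does not control the order), you first place $B$ inside the liminary ideal $\Pi_\alpha(A)$ via Lemma \ref{Lemma:Fell-liminary} so that Proposition \ref{Proposition:finite-Pedersen}(2) guarantees that the Pedersen functions in play are weights. The paper applies Lemma \ref{Lemma:compare} directly to $w_b$ and $w_x$ without checking this hypothesis, so your detour actually buys rigor, at the cost of invoking machinery (Fell ideals) that the paper's proof does not need. Two minor remarks: your second paragraph, which establishes $w_b\leq w_x$ pointwise on all of $\mathrm{Prim}(A)$ by norm-closedness of the rank-$\leq k$ operators, is not actually used in your final chain, which instead runs through $w_b\leq w_B=w_x|_U$ (the paper's analogous estimate is $\mathrm{rank}(\pi(x^*ax))\leq \mathrm{rank}(\rho_\pi(x))$, checked on the dense set $x^*Ax$ — essentially the same computation); and the identification of the essential subspace of $\pi|_B$ with $\overline{\pi(x^*)H}$, hence of $w_B$ with $w_x|_U$, is a correct extra step that the paper avoids by never needing the value of $w_B$.
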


\begin{proof}
Let $X$ be the primitive spectrum of $A$, and $Y$ the primitive spectrum of $%
B$. Consider the continuous function 
\begin{equation*}
Y\rightarrow X\setminus \mathrm{hull}\left( B\right) \text{, }\mathrm{%
\mathrm{Ker}}\left( \pi \right) \mapsto \mathrm{\mathrm{Ker}}\left( \rho
_{\pi }\right)
\end{equation*}%
from \cite[Proposition 4.1.8 and Proposition 4.1.9]{pedersen_algebras_1979}.
This is obtained by assigning to an irreducible representations $\pi $ of $B$
an irreducible representations $\rho _{\pi }$ of $A$ such that $\pi $ is
equivalent to the restriction of $\rho _{\pi }|_{B}$ to its essential
subspace \cite[Proposition 4.1.8]{pedersen_algebras_1979}. Then we have that
for every $a\in A$ and irreducible representation $\pi $ of $B$,%
\begin{equation*}
w_{x^{\ast }ax}\left( \mathrm{\mathrm{Ker}}\left( \pi \right) \right) =%
\mathrm{\mathrm{rank}}\left( \pi \left( x^{\ast }ax\right) \right) \leq 
\mathrm{\mathrm{rank}}\left( \rho _{\pi }\left( x^{\ast }ax\right) \right)
\leq \mathrm{\mathrm{rank}}\left( \rho _{\pi }\left( x\right) \right)
=w_{x}\left( \mathrm{\mathrm{Ker}}\left( \rho _{\pi }\right) \right) \text{.}
\end{equation*}%
Since $x^{\ast }Ax$ is dense in $B$, this shows that, for every $b\in B$,%
\begin{equation*}
w_{b}\left( \mathrm{\mathrm{Ker}}\left( \pi \right) \right) \leq w_{x}\left( 
\mathrm{\mathrm{Ker}}\left( \rho _{\pi }\right) \right) \text{.}
\end{equation*}%
Thus,%
\begin{equation*}
r_{P}\left( b\right) \leq r_{P}\left( x\right) \text{.}
\end{equation*}%
As this holds for every $b\in B$, we conclude%
\begin{equation*}
r_{P}\left( B\right) \leq r_{P}\left( x\right) \text{.}
\end{equation*}

Consider now the inverse function 
\begin{equation*}
X\setminus \mathrm{hull}\left( B\right) \rightarrow Y\text{, }\mathrm{%
\mathrm{Ker}}\left( \rho \right) \mapsto \mathrm{\mathrm{Ker}}\left( \rho
_{B}\right)
\end{equation*}%
as in \cite[Proposition 4.1.8 and Proposition 4.1.9]{pedersen_algebras_1979}%
. This obtained by assigning to an irreducible representation $\rho $ of $A$
that does not vanish on $B$, the restriction of $\rho |_{B}$ on its
essential subspace; see also \cite[Proposition II.6.1.9]%
{blackadar_operator_2006}. Then we have that for $\mathrm{\mathrm{Ker}}%
\left( \rho \right) \in X\setminus \mathrm{hull}\left( B\right) $,%
\begin{equation*}
w_{x}\left( \rho \right) =\mathrm{\mathrm{rank}}\left( \rho \left( x\right)
\right) =\mathrm{\mathrm{rank}}\left( \rho _{B}\left( x\right) \right)
=w_{x}\left( \rho _{B}\right) \text{.}
\end{equation*}%
Considering that, when $\mathrm{\mathrm{Ker}}\left( \rho \right) \in \mathrm{%
hull}\left( B\right) $, $w_{x}\left( \rho \right) =0$, we obtain from these
remarks that%
\begin{equation*}
r_{P}\left( x\right) \leq r_{P}\left( A\right) \text{,}
\end{equation*}%
concluding the proof.
\end{proof}

\subsection{Fell ideals}

Let $A$ be a separable C*-algebra, and $\alpha <\omega _{1}$. We define $%
P_{\alpha }\left( A\right) $ to be the set of elements of $A$ of Pedersen
rank at most $\alpha $. Notice that $P_{\alpha }\left( A\right) $ is closed
under products, so that the C*-subalgebra generated by $P_{\alpha }\left(
A\right) $ is equal to the closed subspace generated by $P_{\alpha }\left(
A\right) $, and it is in fact an ideal.

\begin{definition}
Let $A$ be a separable C*-algebra, and $\alpha $ be a countable ordinal. We
define the \emph{Fell ideal} $\Pi _{\alpha }\left( A\right) $ of $A$ of rank 
$\alpha $ to be the closed ideal generated by $P_{\alpha }\left( A\right) $.
\end{definition}

Observe that $\Pi _{\alpha }\left( A\right) $ is the closed ideal of $A$
corresponding to the largest open subset $F_{\alpha }\left( \mathrm{Prim}%
\left( A\right) \right) $ of $\mathrm{Prim}\left( A\right) $ of Fell rank at
most $\alpha $, i.e., $F_{\alpha }\left( \mathrm{Prim}\left( A\right)
\right) $ is the complement in $\mathrm{Prim}\left( A\right) $ of $\mathrm{%
hull}\left( \Pi _{\alpha }\left( A\right) \right) $. This shows that $\Pi
_{\alpha }\left( A\right) $ is the largest type $I_{\alpha }$ closed ideal
of $A$.

\section{Arbitrarily high rank\label{Section:high-rank}}

In this section we present, for any countable ordinal, examples of separable
liminary C*-algebra of Fell rank and Pedersen rank $\alpha $. In fact, the
examples we present are \emph{scattered }C*-algebras, and in particular
approximately finite-dimensional. We will consider the case of successor
ordinals, as the case of limit ordinals follows by taking direct sums. For
each limit ordinal $\lambda $ we fix an increasing sequence $\left( \lambda
_{n}\right) $ of successor ordinals converging to $\lambda $. We also set $%
\alpha _{n}:=\alpha -1$ if $\alpha $ is a successor.

\subsection{Bi-unitization}

In a similar fashion as in the case of the unitization, one can define a
noncommutative analogue of the \emph{two-point compactification} $X^{\dagger
\dagger }$ of a Fell space $X$ as considered in \cite{lazar_example_1980}.
If $A$ is a C*-algebra, then we let $A^{\dagger \dagger }$ be the algebra $%
M_{2}\left( A\right) \oplus D_{2}\left( \mathbb{C}\right) $ where $%
D_{2}\left( \mathbb{C}\right) \subseteq M_{2}\left( \mathbb{C}\right) $ is
the algebra of diagonal matrices, with coordinate-wise addition and scalar
multiplication. The multiplication is induced by the $D_{2}\left( \mathbb{C}%
\right) $-bimodule structure on $M_{2}\left( A\right) $. Likewise, the
C*-norm is the operator norm induced by letting $A^{\dagger \dagger }$ act
as multiplication operators on $M_{2}\left( A\right) $. Equivalently, fixing
a faithful nondegenerate representation $A\subseteq B\left( H\right) $, $%
A^{\dagger \dagger }$ is the C*-subalgebra of $B\left( H\right) \otimes M_{2}
$ generated by $A\otimes 1$ and $1\otimes D_{2}\left( \mathbb{C}\right) $.
We let $\infty _{A}$ and $\infty _{A}^{\prime }$ be the distinguished
characters of $A^{\dagger \dagger }$ corresponding to the points at infinity
of $X^{\dagger \dagger }$, which we call characters at infinity. The
projection $p_{A}\in A^{\dagger \dagger }$ corresponds to a matrix unit in $%
D_{2}\left( \mathbb{C}\right) $. If $A=C_{0}\left( X\right) $ for some
locally compact non compact Polish space $A$, then $A^{\dagger \dagger }$ is
a C*-algebra with \textrm{Prim}$\left( A^{\dagger \dagger }\right) \cong
X^{\dagger \dagger }$.

We say that a\emph{\ bipointed C*-algebra} is a unital C*-algebra $A$
endowed with two distinguished characters $\infty $, $\infty ^{\prime }$ and
a projection $p$ such that $\infty \left( p\right) =\infty ^{\prime }(1-p)=1$
and $A\cong B^{\dagger \dagger }$ where $B=\mathrm{\mathrm{Ker}}\left(
\omega |_{pAp}\right) $, via an isomorphism that maps the distinguished
characters and projection in $B^{\dagger \dagger }$ to $\infty $, $\infty
^{\prime }$, and $p$ respectively.

\subsection{The Lazar jump and Lazar C*-algebras}

We define a way to obtain, from a given sequence of C*-algebras $\left(
A_{n}\right) _{n\in \mathbb{N}}$, a new C*-algebra \textrm{L}$\left(
A_{n}\right) _{n\in \mathbb{N}}$, called the Lazar jump of $\left(
A_{n}\right) $. This construction is inspired by \cite[Example 3.1]%
{lazar_example_1980}. Let $B$ be the $c_{0}$-sum of $A_{n}$ for $n\in \omega 
$, and set $\mathrm{L}\left( A_{n}\right) _{n\in \mathbb{N}}:=B^{\dagger
\dagger }$.

We can use the jump construction to recursively define a unital preliminary
C*-algebra $L_{\alpha }$ for every countable successor ordinal $\alpha $,
called the Lazar algebra of rank $\alpha $. We being with $L_{0}:=\mathbb{C}$%
. For $\alpha >0$ we then set%
\begin{equation*}
L_{\alpha }:=\mathrm{L}(A_{\left( \alpha -1\right) _{n}}\otimes M_{n}\left( 
\mathbb{C}\right) )_{n\in \mathbb{N}}\text{.}
\end{equation*}%
Then it is proved as in \cite{lazar_example_1980} by induction that $%
L_{\alpha }$ is a separable liminary unital C*-algebra. It is also easily
verified that $L_{\alpha }$ satisfies Fell's condition of order $1$, i.e.\
it has Fell rank $1$. However, if $\infty _{\alpha }$ denotes one of the two
characters at infinity of $L_{\alpha }$, then%
\begin{equation*}
r_{P}\left( A\right) =r_{P}\left( 1_{A}\right) =r_{d_{1_{A}}}\left( \infty
_{\alpha }\right) =\omega \alpha +1\text{.}
\end{equation*}%
Thus, $L_{\alpha }\otimes M_{d}\left( \mathbb{C}\right) $ has Pedersen rank $%
\omega \alpha +d$.

We notice that if $A$ is a separable preliminary C*-algebra with Hausdorff
spectrum, then having Fell rank one implies having Pedersen rank one, i.e.,
being sum of homogeneous C*-algebras \cite{blackadar_operator_2006}. The
Lazar C*-algebras show that this implication does not hold when the spectrum
is not Hausdorff.

\subsection{The Taylor jump and Taylor C*-algebras}

We now defined a modified version of the Lazar jump, which we call the
Taylor jump; see also \cite[Example 3.1]{kyle_norms_1977}. In this case, we
start from a given sequence of \emph{bipointed} C*-algebras $\left(
A_{n},\infty _{n},\infty _{n}^{\prime }\right) _{n\in \mathbb{N}}$, and
produce a new bipointed C*-algebra \textrm{J}$\left( A_{n}\right) _{n\in 
\mathbb{N}}$, called the Taylor jump of $\left( A_{n}\right) _{n\in \mathbb{N%
}}$. This construction is inspired by \cite[Example 3.1]{kyle_norms_1977}.
Let $B$ be the C*-subalgebra of the $c_{0}$-sum of $A_{n}$ for $n\in \omega $
comprising the sequences $\left( x_{n}\right) _{n\in \omega }$ such that%
\begin{equation*}
\infty _{n}^{\prime }\left( x_{n}\right) =\infty _{n+1}\left( x_{n+1}\right)
\end{equation*}%
for every $n\in \omega $. Define then $\mathrm{J}\left( A_{n}\right) _{n\in 
\mathbb{N}}$ to be $B^{\dagger \dagger }$.

Iterating this construction, one define the Taylor algebra $K_{\alpha }$ for
every countable successor ordinal $\alpha $.\ We set $K_{0}=\mathbb{C}$. For
a successor ordinal $\alpha >0$ define%
\begin{equation*}
K_{\alpha }:=\mathrm{J}(K_{\left( \alpha -1\right) _{n}})_{n\in \mathbb{N}}
\end{equation*}%
with distinguished characters $\infty _{\alpha }$ and $\infty _{\alpha
}^{\prime }$. Again, we have that $K_{\alpha }$ is a separable unital
liminary C*-algebra. However, in this case the character $\infty _{\alpha }$
of $K_{\alpha }$ has Fell rank $\omega \alpha +1$, and this maximizes the
Fell ranks of irreducible representations of $K_{\alpha }$. Thus, $K_{\alpha
}\otimes M_{d}\left( \mathbb{C}\right) $ has Fell rank $\omega \alpha +d$,
which in this case is also equal to the Pedersen rank.

\subsection{C*-algebras from trees}

Let $T$ be a countable well-founded infinitely-branching tree of rank $%
\alpha +1$. We consider $T$ as a compact Polish space with respect to the
corresponding tree topology. We define a weight on $T$ by 
\begin{equation*}
w_{T}:t\mapsto 2^{\mathrm{ht}\left( t\right) }
\end{equation*}%
where $\mathrm{ht}\left( t\right) $ is the \emph{height }of $t$ in $T$. Then
we have that $\left( T,w_{T}\right) $ is a countable, compact metrizable
weighted space. In this space, the set $T_{2^{d}}^{\alpha }$ in the
canonical decomposition of $\left( T,w_{T}\right) $ is the set of nodes $%
t\in T$ of \emph{rank} $\alpha $ and \emph{height} $d$. In particular, this
shows that $\left( T,w_{T}\right) $ has order $\alpha +1$, and $%
T_{1}^{\alpha }$ only contains the root of $T$.

By recursion, we assign a \emph{unital scattered }C*-algebra $A\left(
T\right) $ to $T$ with $\mathrm{Prim}\left( T\right) $ isomorphic to $\left(
T,w_{T}\right) $. When $\alpha =0$ define $A\left( T\right) =\mathbb{C}$.
Recursively, for each child $t$ of the root of $T$, let $T_{\leq t}$ be the
corresponding subtree. Define now $T$ to be the unitization of the $c_{0}$%
-sum of $A\left( T_{\leq t}\right) \otimes M_{2}$ where $t$ ranges among the
children of the root. Then it is easily verified that $A\left( T\right) $
has the required properties.

It follows that $A\left( T\right) \otimes M_{d}\left( \mathbb{C}\right) $ is
a separable unital scattered C*-algebra with spectrum homeomorphic to $%
\omega ^{\alpha }+1$, of Fell and Pedersen rank $\omega \alpha +d$.


\bibliographystyle{amsplain}
\bibliography{bibliography}

\end{document}